\theoremstyle{plain}
\newtheorem{theorem}{Theorem}[section]
\newtheorem{lemma}[theorem]{Lemma}
\newtheorem{corollary}[theorem]{Corollary}
\newtheorem{proposition}[theorem]{Proposition}
\newtheorem{mydef*}{Definition*}
\theoremstyle{definition}
\newtheorem{definition}[theorem]{Definition}
\theoremstyle{remark}
\begin{document}
\title{FEW REMARKS ON ESSENTIAL SUBMODULES}
\author{
\name{Sourav Koner and Biswajit Mitra}\thanks{CONTACT Email:  harakrishnaranusourav@gmail.com \& bmitra@math.buruniv.ac.in}
\affil{Department of Mathematics, The University of Burdwan, India 713104 \& Department of Mathematics, The University of Burdwan, India 713104}}

\maketitle 

\begin{abstract}
In the present paper, modules over integral domains and principal ideal domains that are proper essential extensions of some submodules are classified. We introduce a new class of modules that we call $\mathrm{SM}$ modules and show that the class of Artinian modules, locally finite modules, and modules of finite lengths are all proper subclasses of SM modules. We also show that non-semisimple $\mathrm{SM}$ modules possess essential socles. Further, we show that non-semieimple modules over integral domains with nonempty torsion-free parts do not possess essential socles.
\end{abstract}

MSC (2020): Primary: 16D10; Secondary: 16D80.

\begin{keywords}
Proper essential submodule, Socle of a module, Direct sums of modules, Free modules, Modules over principal ideal domains and integral domains.
\end{keywords}

\section{Introduction}

An essential submodule of a module is a submodule that intersects any other nonzero submodule nontrivially. It is named by Eckman and Schpof (see [\cite{ES53}]) though the idea behind essential submodule in a left $R$-module $M$ is due to Johnson (see [\cite{RJ51}]). Essential submodules play a crucial role in noncommutative algebra. As an example, a necessary and sufficient condition that \emph{a left ideal in a ring $R$ is essential, is the existence of a nonzero-divisor (that is, a regular element)}, is again a necessary and sufficient condition for a ring $R$ to have a left classical Artin semisimple quotient ring, that is, $R$ is semiprime left Goldie ring. This shows that the self-generating property of essential submodules generate other essential objects. In much of the literature, the definition of an essential submodule includes the parent module. In this paper, we work with proper essential submodules whose definition is the following:

\begin{definition}\label{df}
A proper nontrivial submodule $E$ of a left $R$-module $M$ is said to be  a proper essential submodule of $M$ if  $E \cap N \neq \{0_{M}\}$ for every nontrivial submodule $N$ of $M$. 
\end{definition} 

Throughout the paper, unless otherwise stated, we assume $R$ is a ring with unity $1 \neq 0$ which is not a field. Observe that any non-trivial proper ideal $I$ in an integral domain $R$ is a proper essential submodule of the $R$-module $R$. However, it is not necessary for an $R$-module to contain a proper essential submodule. Because, if we consider a finite $\mathbb{Z}$-module $M$ of order $n$, where $n$ is square-free, and if $N_{p}$ denote the submodule of $M$ of order $p$ where $p$ is a prime dividing $n$, then $M = \bigoplus_{p \mid n} N_{p}$ shows that $M$ does not contain any proper essential submodule. The following proposition is our observation and is supposed to prevail in the literature, but we could not trace the result either in direct or indirect form anywhere. So we proved this result and incorporated it here. 

\begin{proposition}\label{history}
If an $R$-module $M$ is semisimple, then $M$ does not contain any proper essential submodule. Hence, any $\mathbbm{k}$-vector space $V$ does not contain any proper essential subspace.
\end{proposition}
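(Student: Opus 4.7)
The plan is to exploit the defining structural property of semisimple modules: every submodule is a direct summand. Assuming this characterization, the proof is essentially immediate.

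First I would let $E$ be an arbitrary proper nontrivial submodule of $M$ and aim to exhibit a nonzero submodule $N$ with $E \cap N = \{0_M\}$, which by Definition \ref{df} will show that $E$ fails to be a proper essential submodule. Since $M$ is semisimple, $E$ is a direct summand, so I obtain a submodule $N \le M$ with $M = E \oplus N$. Because $E$ is proper, $N$ cannot be the zero submodule, and the directness of the sum gives $E \cap N = \{0_M\}$, finishing this direction.

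Since $E$ was an arbitrary proper nontrivial submodule, no such submodule can be essential, so $M$ has no proper essential submodule. The statement about a $\mathbbm{k}$-vector space $V$ then follows as a corollary: every module over a field (in particular $\mathbbm{k}$) is semisimple (it admits a basis and is therefore a direct sum of simple $\mathbbm{k}$-submodules, namely the one-dimensional subspaces spanned by basis vectors), so the first part applies.

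I do not anticipate a genuine obstacle here; the only point that requires care is to note that the complement $N$ is nonzero, which is precisely where the hypothesis that $E$ is \emph{proper} (not all of $M$) is used. If one preferred a more hands-on argument avoiding the direct-summand characterization, one could write $M = \bigoplus_{i \in I} S_i$ as a direct sum of simple submodules, pick any simple summand $S_{i_0}$ not contained in $E$ (which exists because $E \ne M$), and observe that $E \cap S_{i_0}$ is a proper submodule of the simple module $S_{i_0}$, hence zero; this yields the same conclusion.
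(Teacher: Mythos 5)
Your argument is correct, but it takes a slightly different route from the paper's. You reduce the statement to the characterization ``every submodule of a semisimple module is a direct summand'' and then split off a nonzero complement $N$ of $E$, whereas the paper argues directly from the decomposition $M=\bigoplus_{\alpha\in\Lambda}N_{\alpha}$ into simples: if $E$ met every nontrivial submodule nontrivially, then simplicity would force $N_{\alpha}\subseteq E$ for every $\alpha$, hence $M\subseteq E$, contradicting properness. Your main route leans on a heavier black box (the direct-summand property is itself a nontrivial theorem, proved via a Zorn-type maximal-complement argument, and within this paper it is exactly the implication $(\mathrm{c})\Rightarrow(\mathrm{a})$ of Theorem \ref{baba}, which is cited from the literature rather than proved); what it buys is brevity and the clean isolation of where properness of $E$ is used, namely in $N\neq\{0_{M}\}$. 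The paper's proof buys self-containedness: it needs nothing beyond the definition of semisimplicity as a direct sum of simples. Note that the ``hands-on'' alternative you sketch at the end --- pick a simple summand $S_{i_{0}}\not\subseteq E$, which exists since otherwise $M=\sum_{i}S_{i}\subseteq E$, and observe $E\cap S_{i_{0}}=\{0_{M}\}$ by simplicity --- is precisely the paper's argument in contrapositive form, and it is also the version I would recommend here, since it keeps Proposition \ref{history} independent of the machinery later packaged in Theorem \ref{baba}. Your treatment of the vector-space case (basis gives a decomposition into one-dimensional, hence simple, subspaces) coincides with the paper's.
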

\begin{proof}
Let $M = \bigoplus_{\alpha \in \Lambda} N_{\alpha}$, where each $N_{\alpha}$ is a simple submodule of $M$. If a submodule $E$ of $M$ intersects every other non-trivial submodule of $M$ non-trivially, then $N_{\alpha} \subseteq E$ for all $\alpha \in \Lambda$. But this implies $\bigoplus_{\alpha \in \Lambda} N_{\alpha} \subseteq E$, that is, $M \subseteq E$. 

Let $\{v_{\alpha}\}_{\alpha \in \Lambda}$ be a basis for the $\mathbbm{k}$-vector space $V$. Then we have $V =\bigoplus_{\alpha \in \Lambda} V_{\alpha}$, where $V_{\alpha}$ is the subspace generated by $v_{\alpha}$. As each $V_{\alpha}$ is a simple $\mathbbm{k}$-subspace of $V$, we can use the first part of this proposition to conclude that $V$ does not contain any proper essential subspace.
\end{proof}

A part of the following theorem \eqref{baba} is very much well-known (see [\cite{TL99}]). Therefore, we include proof of the other part of the theorem which we again did not find in the literature.

\begin{theorem}\label{baba}
Let $M$ be a left module over $R$. Then one can show the equivalence
 
$(\mathrm{a})$ Every submodule of $M$ is a direct summand.

$(\mathrm{b})$ $M$ has no proper essential submodule.

$(\mathrm{c})$ $M$ is semisimple.

$(\mathrm{d})$ $\mathrm{Soc}(M) = M$.
\end{theorem}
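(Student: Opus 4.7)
The plan is to accept the equivalences $\mathrm{(a)} \Leftrightarrow \mathrm{(c)} \Leftrightarrow \mathrm{(d)}$ as the classical portion available in \cite{TL99}, and focus the new work on weaving condition $\mathrm{(b)}$ into the circle. One direction, $\mathrm{(c)} \Rightarrow \mathrm{(b)}$, is already contained in Proposition \ref{history}, since a semisimple module has no proper essential submodule. So the single implication that I would write out in full is $\mathrm{(b)} \Rightarrow \mathrm{(a)}$, which closes the loop.

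For $\mathrm{(b)} \Rightarrow \mathrm{(a)}$, I would begin by fixing an arbitrary submodule $N \subseteq M$. The cases $N = \{0_M\}$ and $N = M$ are trivial, so I restrict attention to $N$ with $\{0_M\} \neq N \neq M$. I would then invoke Zorn's lemma on the poset $\mathcal{F} = \{L \leq M : L \cap N = \{0_M\}\}$, ordered by inclusion, to produce a submodule $N'$ that is maximal subject to $N \cap N' = \{0_M\}$. The chain hypothesis of Zorn is routine: the union of a chain in $\mathcal{F}$ is again a submodule with trivial intersection with $N$, since any element of the intersection would already lie in some member of the chain.

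The key technical step is to show that $N \oplus N'$ is an essential submodule of $M$ in the sense of Definition \ref{df}, except possibly for being proper. Let $K$ be any nonzero submodule of $M$; I want to establish $K \cap (N \oplus N') \neq \{0_M\}$. Suppose, for contradiction, that $K \cap (N \oplus N') = \{0_M\}$. Then I would verify that $N \cap (N' + K) = \{0_M\}$: indeed, any $n = n' + k$ with $n \in N$, $n' \in N'$, $k \in K$ gives $k = n - n' \in (N \oplus N') \cap K = \{0_M\}$, whence $n = n' \in N \cap N' = \{0_M\}$. Consequently $N' + K$ lies in $\mathcal{F}$ and strictly contains $N'$ (since $K \neq \{0_M\}$ while $K \cap N' \subseteq K \cap (N \oplus N') = \{0_M\}$), contradicting maximality of $N'$.

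With $N \oplus N'$ shown to intersect every nonzero submodule nontrivially, and being itself nonzero (it contains the nonzero $N$), hypothesis $\mathrm{(b)}$ forbids it from being a proper essential submodule, so $N \oplus N' = M$. This exhibits $N$ as a direct summand and finishes the implication. The main obstacle in the argument is the essentiality verification via the maximality of $N'$; everything else is bookkeeping with the Zorn's lemma construction and the definitions of proper and nontrivial submodules.
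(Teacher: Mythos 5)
Your proposal is correct and takes essentially the same route as the paper: the substantive implication $\mathrm{(b)} \Rightarrow \mathrm{(a)}$ is argued exactly as in the text, by choosing a maximal complement $N'$ with $N \cap N' = \{0_M\}$ and using the absence of proper essential submodules to force $N \oplus N' = M$, with your contribution being the explicit Zorn's lemma setup and the essentiality verification that the paper leaves implicit. The only cosmetic difference is that you close the cycle via $\mathrm{(c)} \Rightarrow \mathrm{(b)}$ using Proposition \ref{history}, whereas the paper notes $\mathrm{(a)} \Rightarrow \mathrm{(b)}$ directly from the definition; both are immediate.
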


\begin{proof}
Equivalency of (a), (c) and (d) are found in [\cite{TL99}]. Here, we show the equivalence of (a) and (b), that is, $\mathrm{(a)} \Leftrightarrow \mathrm{(b)}$.

$\mathrm{(a)} \Rightarrow \mathrm{(b)}:$ Suppose $M$ has a proper essential submodule. Then from the definition, it trivially follows that the proper essential submodule can not be the direct summand. 

$\mathrm{(b)} \Rightarrow \mathrm{(a)}:$ Conversely let $N$ be a proper submodule of $M$. Then $N$ is not proper essential. There exists a submodule $T$ such that $N\cap T = \{0\}$. Choose a maximal such $T$, then $N\bigoplus T$ is either proper essential in $M$ or $N \bigoplus T = M$. But as per hypothesis, $N \bigoplus T = M$, so the result follows. 
\end{proof}

Theorem \eqref{babama} can be found in [\cite{JL66}]), so, we skip its proof.
 
\begin{theorem}\label{babama}
The intersection of all essential submodules of a left $R$-module $M$ is equal to its socle $\mathrm{Soc}(M)$.
\end{theorem}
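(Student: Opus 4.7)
The plan is to prove the equality $\mathrm{Soc}(M) = \bigcap\{E : E \text{ essential in } M\}$ by showing the two inclusions separately, with the reverse inclusion being the substantive step.

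For the forward inclusion $\mathrm{Soc}(M) \subseteq \bigcap E$, I would pick any simple submodule $S$ of $M$ and any essential submodule $E$. Since $S \cap E$ must be nonzero (by essentiality of $E$) and is a submodule of the simple module $S$, it equals $S$; hence $S \subseteq E$. Because this holds for every simple $S$ and every essential $E$, the sum of all simple submodules $\mathrm{Soc}(M)$ lies inside every essential submodule.

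For the reverse inclusion, let $K$ denote the intersection and fix a nonzero $x \in K$. The goal is to show that the cyclic submodule $Rx$ is semisimple, from which $Rx \subseteq \mathrm{Soc}(M)$ and hence $x \in \mathrm{Soc}(M)$ will follow. By Theorem \ref{baba} this reduces to verifying that every submodule $N \subseteq Rx$ is a direct summand of $Rx$. The key construction is carried out in $M$, not in $Rx$: choose, by Zorn's lemma, a submodule $C \subseteq M$ maximal subject to $C \cap N = 0$. Then $N \oplus C$ is essential in $M$, so the hypothesis $x \in K$ gives $x \in N \oplus C$; writing $x = n + c$ with $n \in N \subseteq Rx$ forces $c = x - n \in C \cap Rx$, so that $N \oplus (C \cap Rx)$ is a submodule of $Rx$ containing $x$ and must therefore equal $Rx$.

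The main obstacle will be the supporting lemma that $N \oplus C$ is essential in $M$ whenever $C$ is maximal with $C \cap N = 0$. This follows by a short argument — any nonzero $T$ meeting $N \oplus C$ trivially would yield $C + T$ strictly larger than $C$ with $(C+T) \cap N = 0$, contradicting maximality — but it is the step where the unusual ``extend into $M$ then intersect back with $Rx$'' manoeuvre actually bites, and is worth writing out carefully to see how the membership $x \in K$ is deployed at exactly the right moment.
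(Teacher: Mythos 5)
Your proposal is correct and complete: the forward inclusion, the Zorn's-lemma construction of a complement $C$, the verification that $N\oplus C$ is essential, and the ``intersect back with $Rx$'' step to show $Rx$ is semisimple (via the equivalence $(\mathrm{a})\Leftrightarrow(\mathrm{c})$ of Theorem \ref{baba}) all go through. The paper itself skips the proof and cites Lambek, and your argument is precisely the standard one found there, so there is nothing to compare beyond noting that you have reconstructed the intended proof.
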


\section{Proper essential submodules over integral and principal ideal domains}

\begin{theorem}\label{maa}
Let $\{M_{\omega}\}_{\omega \in \Lambda}$ be a family of left $R$-modules indexed by a nonempty set $\Lambda$. Then $\bigoplus_{\omega \in \Lambda} M_{\omega}$ has a proper essential submodule if and only if some $M_{\omega}$ has a proper essential submodule. 
\end{theorem}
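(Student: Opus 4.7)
The plan is to handle the two directions separately, treating the ``if'' direction by an explicit construction and the ``only if'' direction by invoking the characterisation of semisimplicity already recorded in Theorem \ref{baba} and Proposition \ref{history}.

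For the ``only if'' direction I would argue by contrapositive: assume no $M_{\omega}$ has a proper essential submodule. Then by Theorem \ref{baba} each $M_{\omega}$ is semisimple, so writing $M_{\omega}=\bigoplus_{\alpha}S_{\omega,\alpha}$ as a direct sum of simple submodules, the module $\bigoplus_{\omega\in\Lambda}M_{\omega}=\bigoplus_{\omega,\alpha}S_{\omega,\alpha}$ is also semisimple. Proposition \ref{history} (or again Theorem \ref{baba}) then gives that $\bigoplus_{\omega\in\Lambda}M_{\omega}$ has no proper essential submodule, as required.

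For the ``if'' direction, assume some $M_{\omega_{0}}$ has a proper essential submodule $E_{\omega_{0}}$. Write $M=\bigoplus_{\omega\in\Lambda}M_{\omega}$ and set
\[
E \;=\; E_{\omega_{0}}\oplus\bigoplus_{\omega\neq\omega_{0}}M_{\omega}.
\]
Since $E_{\omega_{0}}\subsetneq M_{\omega_{0}}$, $E$ is a proper submodule of $M$. To verify that $E$ is essential, take any nonzero submodule $N$ of $M$ and consider two cases. If some nonzero $x\in N$ has zero component in $M_{\omega_{0}}$, then $x\in \bigoplus_{\omega\neq\omega_{0}}M_{\omega}\subseteq E$, so $x\in N\cap E$ and we are done. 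Otherwise the projection $\pi_{\omega_{0}}\colon N\to M_{\omega_{0}}$ is injective, and $N':=\pi_{\omega_{0}}(N)$ is a nonzero submodule of $M_{\omega_{0}}$. Essentiality of $E_{\omega_{0}}$ in $M_{\omega_{0}}$ yields a nonzero $y\in E_{\omega_{0}}\cap N'$, which lifts to a unique $x\in N$ with $\pi_{\omega_{0}}(x)=y$. Because $x_{\omega_{0}}=y\in E_{\omega_{0}}$, this $x$ lies in $E\cap N$ and is nonzero.

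The main obstacle, and the only place where genuine care is needed, is the verification of essentiality in the ``if'' direction, specifically the second case where every nonzero element of $N$ has a nonzero $\omega_{0}$-component; the idea of pulling an essential witness from $E_{\omega_{0}}\cap \pi_{\omega_{0}}(N)$ back to $N$ using the injectivity of $\pi_{\omega_{0}}|_{N}$ is the pivotal observation. All other steps are formal, and the ``only if'' direction reduces essentially to what has already been proved in the excerpt.
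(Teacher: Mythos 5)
Your proof is correct, and its ``if'' direction is essentially the paper's own argument: you build the same submodule $E_{\omega_{0}}\oplus\bigoplus_{\omega\neq\omega_{0}}M_{\omega}$, the only cosmetic difference being that the paper scales a single nonzero element $(n_{\omega})$ by some $r\in R$ with $0\neq rn_{\omega_{0}}\in E_{\omega_{0}}$ (via the cyclic submodule $\langle n_{\omega_{0}}\rangle$), while you argue at the level of the whole submodule $N$ using injectivity of $\pi_{\omega_{0}}|_{N}$; both verifications are sound. Where you genuinely diverge is the ``only if'' direction. The paper argues directly: if $E$ is proper essential in the direct sum, then for each $\omega$ the intersection $E\cap M_{\omega}$ is essential in $M_{\omega}$ (any nonzero submodule of $M_{\omega}$ is a nonzero submodule of the sum), so the hypothesis that no $M_{\omega}$ has a proper essential submodule forces $E\cap M_{\omega}=M_{\omega}$ for all $\omega$, whence $M\subseteq E$, a contradiction. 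You instead take the contrapositive through Theorem \ref{baba}: no proper essential submodule in each $M_{\omega}$ means each $M_{\omega}$ is semisimple, a direct sum of semisimple modules is semisimple, and Proposition \ref{history} kills any proper essential submodule of the sum. Your route is shorter but leans on the equivalence $\mathrm{(b)}\Leftrightarrow\mathrm{(c)}$ of Theorem \ref{baba} (whose $\mathrm{(a)}\Leftrightarrow\mathrm{(c)}$ part is cited from the literature) together with the standard closure of semisimplicity under arbitrary direct sums; the paper's argument is self-contained, purely about intersections, and in fact yields the useful intermediate observation that an essential submodule of a direct sum meets every summand essentially. Either way the degenerate cases (zero or simple summands) are handled consistently with the paper's convention that proper essential submodules are nontrivial and proper.
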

\begin{proof}
Let $M = \bigoplus_{\omega \in \Lambda} M_{\omega}$ and $\omega_{0} \in \Lambda$ be such that $M_{\omega_{0}}$ has a proper essential submodule $E_{\omega_{0}}$. Let $X = \bigoplus_{\omega \in \Lambda} X_{\omega}$, where $X_{\omega_{0}} = E_{\omega_{0}}$ and $X_{\omega} = M_{\omega}$ if $\omega \neq \omega_{0}$. We claim that $X$ is a proper essential submodule of $M$. Let $N$ be any nontrivial submodule of $M$ and let $(n_{\omega})_{\omega \in \Lambda}$ be a nonzero element of $N$. Clearly, $(n_{\omega})_{\omega \in \Lambda} \in X$ if $n_{\omega_{0}} = 0_{M_{\omega_{0}}}$. If $n_{\omega_{0}} \neq 0_{M_{\omega_{0}}}$, consider the cyclic module $\langle n_{\omega_{0}} \rangle$. As $E_{\omega_{0}} \cap \langle n_{\omega_{0}} \rangle \neq \{0_{M_{\omega_{0}}}\}$, we conclude that $X \cap N \neq \{0_{M}\}$. 

For the converse, assume that $M$ has a proper essential submodule $E$ but $M_{\omega}$ does not have a proper essential submodule for all $\omega \in \Lambda$. Observe that, for all $\omega \in \Lambda$, we have either $E \cap M_{\omega}$ is proper essential in $M_{\omega}$ or $E \cap M_{\omega} = M_{\omega}$. As $M_{\omega}$ does not have any proper essential submodule, it must be that $E \cap M_{\omega} = M_{\omega}$ for all $\omega \in \Lambda$, that is, $M \subseteq E$, a contradiction. 
\end{proof}

Free modules play a central role in algebra, since any module is the homomorphic image of some free module. The following corollary is now immediate from theorem \eqref{maa}.

\begin{theorem}\label{spm} 
Let $R$ be an integral domain and let $A$ be a nonempty set. Then the free $R$-module $R(A)$ on the nonempty subset $A$ has a proper essential submodule.
\end{theorem}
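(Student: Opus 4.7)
The plan is to combine Theorem \ref{maa} with the observation, already noted in the introduction, that any nontrivial proper ideal of an integral domain is a proper essential submodule of the regular module. Since the free module $R(A)$ is by definition $\bigoplus_{a\in A} R_{a}$, where each $R_{a}$ is a copy of $R$ as a left $R$-module, Theorem \ref{maa} tells us that $R(A)$ has a proper essential submodule if and only if at least one summand does. The nonemptiness of $A$ guarantees that there is at least one such summand. Hence the problem is immediately reduced to producing a proper essential submodule of $R$ itself.

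For that reduced problem, I would first use the standing hypothesis that $R$ is a ring with $1 \neq 0$ which is not a field. Because $R$ is an integral domain but not a field, there must exist a nonzero non-unit $r \in R$; otherwise every nonzero element would be a unit and $R$ would be a field. The principal ideal $rR$ is then a proper nontrivial submodule of $R$. To see that $rR$ is actually essential, I would take any other nontrivial submodule $J$ of $R$ (that is, a nonzero ideal), pick nonzero $x \in rR$ and nonzero $y \in J$, and observe that $xy$ lies in $rR \cap J$ and is nonzero because $R$ has no zero divisors. Thus $rR$ meets every nonzero submodule of $R$ nontrivially, so it is proper essential.

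Applying the forward direction of Theorem \ref{maa} to the decomposition $R(A) = \bigoplus_{a \in A} R_{a}$ with this proper essential submodule in a chosen summand $R_{a_{0}}$ then gives a proper essential submodule of $R(A)$, completing the proof. There is no real obstacle in this argument; the only subtlety is remembering that $R$ not being a field is used exactly to guarantee the existence of a nonzero non-unit, and hence of a nontrivial proper ideal inside $R$.
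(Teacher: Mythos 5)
Your proposal is correct and follows the paper's own route: decompose $R(A) \simeq \bigoplus_{A} R$, invoke Theorem \ref{maa}, and use the fact (noted in the paper's introduction) that any nontrivial proper ideal of an integral domain that is not a field is a proper essential submodule of $R$. You merely spell out explicitly the existence of a nonzero non-unit and the essentiality of $rR$, which the paper takes as an earlier observation.
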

\begin{proof}
Since $R(A)$ is free on the subset $A$, we have $R(A) \simeq \bigoplus_{A} R$. The proof is now completed using theorem \eqref{maa}.
\end{proof}

An immediate consequence of theorem \eqref{maa} is the classification of finitely generated modules over principal ideal domains possessing proper essential submodules. Before that, we look the following lemma.

\begin{lemma}\label{ons}
The non-simple left cyclic module $\langle m \rangle$ over an integral domain $R$ has a proper essential submodule if and only if there exists a maximal ideal $\mathcal{M}$ of $R$ containing $\mathrm{Ann}_{R}(m)$ such that for any ideal $I$ of $R$ containing $\mathrm{Ann}_{R}(m)$ we have $\alpha \beta \notin \mathrm{Ann}_{R}(m)$ for some $\alpha \in \mathcal{M} \setminus \mathrm{Ann}_{R}(m)$ and for some $\beta \in I \setminus \mathrm{Ann}_{R}(m)$. Hence, the non-simple cyclic module $\langle m \rangle$ over a principal ideal domain $R$ has a proper essential submodule if and only if the generator of the $\mathrm{Ann}_{R}(m)$ is not square-free.
\end{lemma}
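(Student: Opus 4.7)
My plan is to identify $\langle m\rangle$ with $R/A$, where $A=\mathrm{Ann}_R(m)$, via $rm\mapsto r+A$. Under this correspondence, submodules of $\langle m\rangle$ are in bijection with ideals $J$ of $R$ containing $A$ (sending $J$ to $Jm$); the submodule $Jm$ is proper iff $J\neq R$, nontrivial iff $J\supsetneq A$, and essential iff $J\cap I\supsetneq A$ for every ideal $I\supsetneq A$. The whole lemma then amounts to translating this intersection condition into the product condition on $\alpha\beta$, with the essential submodule realized as $\mathcal{M}m$ for a suitable maximal ideal $\mathcal{M}\supseteq A$.

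For the implication ($\Leftarrow$) I take the maximal ideal $\mathcal{M}$ supplied by the hypothesis as the candidate. Since $R$ is not a field, $\mathcal{M}\subsetneq R$, and the hypothesis applied to $I=\mathcal{M}$ forces $\mathcal{M}\setminus A\neq\emptyset$, so $\mathcal{M}m$ is proper and nontrivial. For any ideal $I\supsetneq A$, the product $\alpha\beta$ supplied by the hypothesis lies in both $\mathcal{M}$ and $I$ (ideals are closed under ring multiplication) and outside $A$, giving $\mathcal{M}m\cap Im\neq 0$, so $\mathcal{M}m$ is essential.

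For the converse ($\Rightarrow$), start from a proper essential submodule $Jm$ with $A\subsetneq J\subsetneq R$ and enlarge $J$ to a maximal ideal $\mathcal{M}$ via Zorn's lemma. Any submodule containing an essential submodule is essential, so $\mathcal{M}m$ remains a proper essential submodule. For each ideal $I\supsetneq A$, essentiality gives $\mathcal{M}\cap I\supsetneq A$, and I must then exhibit a witness in the specific product form $\alpha\beta$ with $\alpha\in\mathcal{M}\setminus A$ and $\beta\in I\setminus A$. This refinement is the subtlest step and is where I expect the main obstacle to lie; it will rely on the integral-domain hypothesis on $R$ and the freedom to pick $\alpha$ and $\beta$ independently before multiplying, rather than simply setting $\alpha=\beta=x$ for a single witness $x\in(\mathcal{M}\cap I)\setminus A$.

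For the PID consequence, write $A=(a)$ and factor $a=up_1^{e_1}\cdots p_k^{e_k}$ into distinct prime powers. By the Chinese Remainder Theorem, $R/A\cong\bigoplus_{i=1}^k R/(p_i^{e_i})$, so Theorem~\eqref{maa} reduces the question to each factor. When $e_i=1$, $R/(p_i)$ is a field, hence semisimple, and has no proper essential submodule by Proposition~\eqref{history}. When $e_i\geq 2$, the submodule $(p_i^{e_i-1})/(p_i^{e_i})$ is simple and sits inside every nonzero submodule of $R/(p_i^{e_i})$ (its submodules are exactly $(p_i^j)/(p_i^{e_i})$ for $0\leq j\leq e_i$), hence is proper essential. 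Thus $\langle m\rangle$ admits a proper essential submodule iff some $e_i\geq 2$, i.e.\ iff $a$ is not square-free.
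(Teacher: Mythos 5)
Your backward implication, and your reduction of the forward implication to the statement ``$\mathcal{M}\cap I\supsetneq \mathrm{Ann}_{R}(m)$ for every ideal $I\supsetneq \mathrm{Ann}_{R}(m)$'', are correct and coincide with the paper's argument. The genuine gap is exactly the step you flag and postpone: upgrading an element of $(\mathcal{M}\cap I)\setminus \mathrm{Ann}_{R}(m)$ to a witness of the prescribed product form $\alpha\beta\notin \mathrm{Ann}_{R}(m)$ with $\alpha\in\mathcal{M}\setminus \mathrm{Ann}_{R}(m)$ and $\beta\in I\setminus \mathrm{Ann}_{R}(m)$. No choice of $\alpha$ and $\beta$ ``picked independently'' can make this work, because the product-form necessity is false. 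Take $R=\mathbb{Z}$ and $m$ a generator of $\mathbb{Z}/4\mathbb{Z}$, so $\mathrm{Ann}_{R}(m)=(4)$: the module has the proper essential submodule $2\mathbb{Z}/4\mathbb{Z}$, yet the only maximal ideal containing $(4)$ is $\mathcal{M}=(2)$, and for $I=(2)$ every $\alpha,\beta\in(2)\setminus(4)$ satisfy $\alpha\beta\in(4)$. More generally, over a PID with $\mathrm{Ann}_{R}(m)=(a)$, write $a=p^{e}s$ with $p$ prime not dividing $s$; for $\mathcal{M}=(p)$ and $I=(p^{e-1}s)\supsetneq(a)$ one has $\alpha\beta\in(p^{e}s)=(a)$ for all $\alpha\in\mathcal{M}$, $\beta\in I$, so the stated criterion fails for every non-simple torsion cyclic module over a PID --- which would contradict the lemma's own second sentence. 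The condition that is actually equivalent to the existence of a proper essential submodule is the intersection form you already reached: some maximal $\mathcal{M}\supseteq\mathrm{Ann}_{R}(m)$ with $\mathcal{M}\cap I\not\subseteq\mathrm{Ann}_{R}(m)$ for all $I\supsetneq\mathrm{Ann}_{R}(m)$; the witness need not be a product of the prescribed shape.

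For what it is worth, the paper's own proof of the forward direction stops exactly where you stop: it derives $\mathcal{M}/\mathrm{Ann}_{R}(m)\cap I/\mathrm{Ann}_{R}(m)\neq\{0\}$ and never produces the product witnesses, and the PID paragraph then leans on that unproved necessity (``it is now evident\dots''). So you have not missed a trick that the paper supplies; your honesty about the obstacle locates a defect in the statement itself. By contrast, your treatment of the PID sentence genuinely differs from the paper's and is sound on its own: the Chinese Remainder decomposition, theorem \eqref{maa}, Proposition \eqref{history} for the factors $R/(p_{i})$, and the chain of submodules of $R/(p_{i}^{e_{i}})$ with socle $(p_{i}^{e_{i}-1})/(p_{i}^{e_{i}})$ prove ``proper essential submodule exists iff $a$ is not square-free'' without any appeal to the first sentence (modulo the implicit assumption $a\neq 0$, which the paper also makes). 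If you reformulate the first sentence with the intersection condition above, both directions follow from your reduction, and your PID argument can stand as the proof of the second sentence.
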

\begin{proof}
Let $\mathcal{M}$ be a maximal ideal of $R$ that contains $\mathrm{Ann}_{R}(m)$ and $I$ be any ideal of $R$ containing $\mathrm{Ann}_{R}(m)$. If for some $\alpha \in \mathcal{M} \setminus \mathrm{Ann}_{R}(m)$ and for some $\beta \in I \setminus \mathrm{Ann}_{R}(m)$ we have $\alpha \beta \notin \mathrm{Ann}_{R}(m)$, then we see that $\mathcal{M}/\mathrm{Ann}_{R}(m)$ is proper essential in $R/\mathrm{Ann}_{R}(m)$. As $R/\mathrm{Ann}_{R}(m) \simeq \langle m \rangle$, we conclude that $\langle m \rangle$ has a proper essential submodule.

Conversely, suppose $\langle m \rangle$ has a proper essential submodule $E$. Since we have $\langle m \rangle \simeq R/\mathrm{Ann}_{R}(m)$, there exists some ideal $J$ of $R$ containing $\mathrm{Ann}_{R}(m)$ such that $J/\mathrm{Ann}_{R}(m) \simeq E$. Put $J$ in some maximal ideal $\mathcal{M}$. As $J/\mathrm{Ann}_{R}(m)$ is proper essential in $R/\mathrm{Ann}_{R}(m)$, for any ideal $I$ of $R$ containing $\mathrm{Ann}_{R}(m)$, we have $J/\mathrm{Ann}_{R}(m) \cap I/\mathrm{Ann}_{R}(m) \neq \{0_{\langle m \rangle}\}$. As $J$ is contained in $\mathcal{M}$, we get that $\mathcal{M}/\mathrm{Ann}_{R}(m) \cap I/\mathrm{Ann}_{R}(m) \neq \{0_{\langle m \rangle}\}$.

Let $\mathrm{Ann}_{R}(m) = \langle a \rangle$ and let $a = up_{1}p_{2} \cdots p_{n}$ where $p_{i}$'s are distinct primes and $u$ is a unit. The only maximal ideals that contain $\langle a \rangle$ are the ideals $\langle p_{i} \rangle$ where $1 \leq i \leq n$. If $\alpha \in \langle p_{i} \rangle \setminus \langle a \rangle$ and $\beta \in \langle q_{i} \rangle \setminus \langle a \rangle$ where $q_{i} = p_{1}p_{2} \cdots p_{i - 1}p_{i + 1} \cdots p_{n}$, we see that $\alpha \beta \in \langle a \rangle$. Now, the first part of this lemma shows that $\langle m \rangle$ does not have a proper essential submodule.

Assume now that $a = up_{1}^{\alpha_{1}}p_{2}^{\alpha_{2}} \cdots p_{n}^{\alpha_{n}}$, where $p_{i}$'s are distinct primes, $u$ is a unit and $\alpha_{i} > 1$ for some $i$. It is now evident that the maximal ideal $\langle p_{i} \rangle$ of $R$ satisfies the property of the first of this lemma.
\end{proof}

\begin{theorem}\label{jg}
A finitely generated left module $M$ over a principal ideal domain $R$ has a proper essential submodule if and only if the betti number of $M$ is at least $1$ or some invariant factor is not square free. 
\end{theorem}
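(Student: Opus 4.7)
The plan is to combine the structure theorem for finitely generated modules over a PID with the reduction furnished by Theorem \eqref{maa}. By the structure theorem one can write
\[
M \;\cong\; R^{n} \,\oplus\, R/\langle d_{1} \rangle \oplus R/\langle d_{2} \rangle \oplus \cdots \oplus R/\langle d_{k} \rangle,
\]
where $n\ge 0$ is the Betti number of $M$ and the non-unit invariant factors satisfy $d_{1}\mid d_{2} \mid \cdots \mid d_{k}$. By Theorem \eqref{maa}, $M$ has a proper essential submodule if and only if at least one summand in this decomposition does, so it suffices to analyse the free summand $R^{n}$ and the cyclic torsion summands $R/\langle d_{i}\rangle$ separately.

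For the sufficiency direction, suppose first that $n \ge 1$. Then $R$ itself appears as a direct summand of $M$, and by Theorem \eqref{spm} (applied with $A$ a singleton, or simply because in the integral domain $R$ that is not a field every nonzero proper ideal is essential in $R$) the summand $R$ carries a proper essential submodule. If instead some invariant factor $d_{i}$ is not square-free, then the PID half of Lemma \eqref{ons} produces a proper essential submodule in $R/\langle d_{i}\rangle$. In either case, Theorem \eqref{maa} lifts that proper essential submodule up to one in $M$.

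For the necessity direction, I would argue by contraposition: assume $n=0$ and that every invariant factor $d_{i}$ is square-free. Then each cyclic summand $R/\langle d_{i}\rangle$ is either simple (when $d_{i}$ is prime) or, by the Chinese remainder theorem applied to the factorisation of $d_{i}$ into distinct primes, a direct sum of simple modules; in both cases the PID part of Lemma \eqref{ons} (or equivalently Proposition \eqref{history}) says no $R/\langle d_{i}\rangle$ has a proper essential submodule. With no free summand present either, a second use of Theorem \eqref{maa} forces $M$ to have no proper essential submodule, contradicting the hypothesis.

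I do not expect a genuine obstacle here; the main subtlety is purely notational. One must check that the statement "the invariant factor is square-free" is independent of the choice of generator of the corresponding principal ideal, which is automatic in a PID since two generators differ by a unit; one must also keep track of the trivial cases where $k=0$ (so $M$ is free, covered by the $n\ge 1$ case, or $M=0$ for which the theorem holds vacuously). Once these bookkeeping items are dispatched, the argument is essentially a two-line assembly of Theorem \eqref{maa}, Theorem \eqref{spm}, and Lemma \eqref{ons}.
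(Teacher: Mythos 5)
Your proposal is correct and follows essentially the same route as the paper, which likewise invokes the invariant-factor decomposition and then cites Theorem \eqref{maa}, Theorem \eqref{spm}, and Lemma \eqref{ons}; you merely spell out the case analysis (free summand versus torsion summands, simple versus non-simple cyclic pieces) that the paper leaves implicit. No gaps to report.
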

\begin{proof}
The fundamental theorem of finitely-generated modules over principal ideal domain yields the invariant-factor decomposition $M = R^{n} \bigoplus R/\langle a_{{1}} \rangle \bigoplus \cdots \bigoplus R/\langle a_{{m}} \rangle$. The proof is now completed using theorems \eqref{maa}, \eqref{spm}, and lemma \eqref{ons}  
\end{proof}

\begin{lemma}\label{ram}
A left $R$-module $M$ has a proper essential submodule if and only if the left $R/\mathrm{Ann}_{R}(M)$-module $M$ has a proper essential submodule. Hence, if an $R$-module $M$ has a proper essential submodule, then $\mathrm{Ann}_{R}(M)$ can not be a maximal ideal.
\end{lemma}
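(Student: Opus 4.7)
The plan is to reduce the biconditional to a single structural observation: the $R$-submodules of $M$ and the $R/\mathrm{Ann}_R(M)$-submodules of $M$ are literally the same subsets of $M$. First I would verify that $\mathrm{Ann}_R(M)$ is two-sided, since for $r \in \mathrm{Ann}_R(M)$ and any $s \in R$ we have $(rs)m = r(sm) = 0$ and $(sr)m = s(rm) = 0$ for every $m \in M$. Hence $R/\mathrm{Ann}_R(M)$ is a well-defined ring, and I would equip $M$ with the left action $(r + \mathrm{Ann}_R(M)) \cdot m := rm$, checking that this is independent of the coset representative precisely because elements of $\mathrm{Ann}_R(M)$ annihilate every $m$.

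Next comes the key observation. Since $(r+\mathrm{Ann}_R(M))\cdot m$ and $rm$ denote the same element of $M$, a subset $N \subseteq M$ is closed under the $R$-action if and only if it is closed under the $R/\mathrm{Ann}_R(M)$-action. Thus the two module structures share a common lattice of submodules, so the condition \emph{$E$ is a proper nontrivial submodule intersecting every nontrivial submodule nontrivially} is literally the same statement for both structures. This gives the biconditional without any further work.

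For the \emph{hence} clause, I would argue by contradiction. Suppose $M$ has a proper essential submodule while $\mathrm{Ann}_R(M)$ is a maximal ideal. Then $R/\mathrm{Ann}_R(M)$ is a field (respectively a division ring), so $M$ acquires the structure of a vector space over $R/\mathrm{Ann}_R(M)$. Proposition~\ref{history} forbids proper essential subspaces in any vector space, so $M$ has no proper essential $R/\mathrm{Ann}_R(M)$-submodule. Applying the equivalence just established, $M$ admits no proper essential $R$-submodule either, contradicting the hypothesis.

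The main (though modest) obstacle is notational hygiene: one must keep the two module structures cleanly separated while spelling out the correspondence of submodules, and be careful that the well-definedness of the $R/\mathrm{Ann}_R(M)$-action really does rest on the annihilation property. After that, the corollary drops out of Proposition~\ref{history} with no further computation.
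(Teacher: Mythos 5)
Your proposal is correct and takes essentially the same approach as the paper: the paper's two element-chasing directions are precisely a verification of the fact you state outright, namely that the $R$-submodules and the $R/\mathrm{Ann}_{R}(M)$-submodules of $M$ coincide, so proper essentiality is the same condition for both structures. Like the paper, you then settle the maximality clause by invoking Proposition~\ref{history} for the induced vector-space structure, so there is no substantive difference in method.
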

\begin{proof}
At first, assume that $R$-module $M$ has a proper essential submodule $E$. Now, let $N$ be any submodule of $R/\mathrm{Ann}_{R}(M)$-module $M$. Consider the cyclic $R/\mathrm{Ann}_{R}(M)$-module $\langle n \rangle$, where $n \in N$ be any element. As, $\langle n \rangle \cap E \neq \{0_{M}\}$, there exists $r \in R$ such that $rn \neq 0$ and $rn \in \langle n \rangle \cap E$. Then $(r + \mathrm{Ann}_{R}(M))n \in \langle n \rangle \cap E$ (as $R/\mathrm{Ann}_{R}(M)$-module). Hence, this shows that $R/\mathrm{Ann}_{R}(M)$-module $M$ has a proper essential submodule $E$. 

Conversely, assume that $R/\mathrm{Ann}_{R}(M)$-module $M$ has a proper essential submodule $E$. Let $N$ be any submodule of $M$. Then, $\langle n \rangle \cap E \neq \{0_{M}\}$ for $n \in N$ with $n \neq 0_{M}$. This implies, for some $(r + \mathrm{Ann}_{R}(M)) \in R/\mathrm{Ann}_{R}(M)$ with $r + \mathrm{Ann}_{R}(M) \neq \mathrm{Ann}_{R}(M)$, $(r + \mathrm{Ann}_{R}(M))n \in E$, that is, $rn \in E$. Thus, $E$ is proper essential in $M$.

Now, applying proposition \eqref{history} and the first part of this lemma, we get that $\mathrm{Ann}_{R}(M)$ can not be a maximal ideal.
\end{proof}

Let $M$ be a module over an integral domain $R$. If $p \in R$ is a prime, recall that the $p$-primary component $\mathrm{Ann}^{\ast}_{M}(p)$ of $M$ is the set of all elements of $M$ that are annihilated by some positive power of $p$ and its sub-primary component $\mathrm{Ann}_{M}(p^{m})$ is the set of all elements of $M$ that are annihilated by some fixed power of $p$ (i.e., annihilated by $p^{m}$).

\begin{lemma}\label{jpm}
Let $M$ be a non-faithful left module over a principal ideal domain $R$ and let $p_{1}, p_{2}, \ldots, p_{n}$ be the distinct primes that divide the generator of $\mathrm{Ann}_{R}(M)$. Then $M$ has a proper essential submodule if and only if some sub-primary component of $M$ has a proper essential submodule for some prime $p_{i}$. 
\end{lemma}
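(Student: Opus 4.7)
The plan is to reduce to the decomposition theorem (Theorem~\eqref{maa}) by establishing a primary decomposition of $M$. Since $M$ is non-faithful, write $\mathrm{Ann}_R(M) = \langle a \rangle$ with $a = u\,p_1^{k_1}\cdots p_n^{k_n}$ for a unit $u$ and distinct primes $p_i$. Setting $q_i := \prod_{j \ne i} p_j^{k_j}$, one has $\gcd(q_1,\ldots,q_n) = 1$, so by B\'{e}zout there exist $c_i \in R$ with $1 = \sum_i c_i q_i$. For any $m \in M$, decompose $m = \sum_i c_i q_i m$; each summand lies in $\mathrm{Ann}_M(p_i^{k_i})$ because $p_i^{k_i}(c_i q_i m)$ is a unit multiple of $a\,m = 0$. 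A companion B\'{e}zout argument applied to $\gcd(p_j^{k_j}, q_j) = 1$ gives uniqueness of the representation, yielding the primary decomposition $M = \bigoplus_{i=1}^{n} \mathrm{Ann}_M(p_i^{k_i})$, whose summands are themselves sub-primary components.

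For the forward direction, assume $M$ has a proper essential submodule. Applying Theorem~\eqref{maa} to the decomposition above, some $\mathrm{Ann}_M(p_i^{k_i})$ has a proper essential submodule, and this is already a sub-primary component for the prime $p_i$.

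For the converse, suppose some sub-primary component $\mathrm{Ann}_M(p^m)$ has a proper essential submodule $E$. First, $p$ must coincide with some $p_i$: otherwise $\gcd(p^m, a) = 1$, and B\'{e}zout forces $\mathrm{Ann}_M(p^m) = \{0_M\}$, which cannot contain a PES. Next, $\mathrm{Ann}_M(p^m) \subseteq \mathrm{Ann}_M(p_i^{k_i})$ in all cases (for $m \le k_i$ by direct inclusion; for $m > k_i$ by the same $\gcd$ argument, since $\gcd(p^m, a) = p_i^{k_i}$ annihilates every such element). The key step is to upgrade the essentiality of $E$ from $\mathrm{Ann}_M(p^m)$ to the full primary component $\mathrm{Ann}_M(p_i^{k_i})$: given any nonzero $x \in \mathrm{Ann}_M(p_i^{k_i})$, the smallest $\ell \ge 1$ with $p_i^{\ell} x = 0$ produces $y := p_i^{\ell-1} x \neq 0$ with $p_i y = 0$, hence $y \in \mathrm{Ann}_M(p_i) \subseteq \mathrm{Ann}_M(p^m)$; essentiality of $E$ inside $\mathrm{Ann}_M(p^m)$ then gives a nonzero element of $E \cap \langle y \rangle \subseteq E \cap \langle x \rangle$. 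Thus $E$ is a proper essential submodule of $\mathrm{Ann}_M(p_i^{k_i})$, and a final application of Theorem~\eqref{maa} delivers a proper essential submodule of $M$.

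The principal obstacle I expect is precisely this upgrade step in the converse: a proper essential submodule of the potentially smaller module $\mathrm{Ann}_M(p^m)$ need not, in general, remain essential in the larger primary component $\mathrm{Ann}_M(p_i^{k_i})$, and the argument above hinges on the specific feature that in a $p_i$-primary module over a PID every nonzero cyclic submodule already meets $\mathrm{Ann}_M(p_i)$, hence meets $\mathrm{Ann}_M(p^m)$ for every $m \ge 1$.
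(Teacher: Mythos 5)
Your proof is correct, and its core is the same as the paper's: establish the primary decomposition $M = \bigoplus_{i = 1}^{n} \mathrm{Ann}_{M}(p_{i}^{\alpha_{i}})$ (your $k_{i}$ is the paper's $\alpha_{i}$) and then invoke theorem \eqref{maa}. The paper gets the decomposition from the Chinese remainder theorem applied to the pairwise comaximal ideals $\langle p_{i}^{\alpha_{i}} \rangle$, via the multiplication maps $\varphi_{i} : M \rightarrow q_{i}M$; you get it directly from a B\'{e}zout identity $1 = \sum_{i} c_{i}q_{i}$ --- these are interchangeable routes to the same fact. The genuine difference lies in the converse. The paper's proof ends with ``the result now follows from theorem \eqref{maa}'', which only covers the case where the sub-primary component carrying the proper essential submodule is one of the actual summands $\mathrm{Ann}_{M}(p_{i}^{\alpha_{i}})$; the statement of the lemma, however, allows an arbitrary sub-primary component $\mathrm{Ann}_{M}(p_{i}^{m})$. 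Your upgrade step closes exactly this gap: since every nonzero $x \in \mathrm{Ann}_{M}(p_{i}^{\alpha_{i}})$ has $p_{i}^{\ell - 1}x \neq 0$ annihilated by $p_{i}$ for the minimal $\ell$, every nonzero cyclic submodule of the primary component meets $\mathrm{Ann}_{M}(p_{i}) \subseteq \mathrm{Ann}_{M}(p_{i}^{m})$, so a proper essential submodule of $\mathrm{Ann}_{M}(p_{i}^{m})$ stays proper essential in $\mathrm{Ann}_{M}(p_{i}^{\alpha_{i}})$, and theorem \eqref{maa} applies. In this respect your argument is more complete than the printed one, and the preliminary observations (that $p$ must be one of the $p_{i}$, and that $\mathrm{Ann}_{M}(p_{i}^{m}) \subseteq \mathrm{Ann}_{M}(p_{i}^{\alpha_{i}})$ even for $m > \alpha_{i}$) are also correct.
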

\begin{proof}
Let $\mathrm{Ann}_{R}(M) = \langle a \rangle$ and $a = up_{1}^{\alpha_{1}}p_{2}^{\alpha_{2}} \cdots p_{n}^{\alpha_{n}}$, where $p_{i}$'s are distinct primes and $u$ is a unit. For $1 \leq i \leq n$, if we set $q_{i} = p_{1}^{\alpha_{1}}p_{2}^{\alpha_{2}} \cdots p_{i - 1}^{\alpha_{i - 1}}p_{i + 1}^{\alpha_{i + 1}} \cdots p_{n}^{\alpha_{n}}$, then the homomorphism $\varphi_{i}: M \rightarrow q_{i}M$ shows that $M/\mathrm{Ker}(\varphi_{i}) \simeq q_{i}M$. Since we have $\mathrm{Ker}(\varphi_{i}) = \langle p_{i}^{\alpha_{i}} \rangle M$ and $q_{i}M = \mathrm{Ann}_{M}(p_{i}^{\alpha_{i}})$, we conclude that $M/\langle p_{i}^{\alpha_{i}} \rangle M \simeq \mathrm{Ann}_{M}(p_{i}^{\alpha_{i}})$. As the ideals $\langle p_{1}^{\alpha_{1}} \rangle$, $\langle p_{2}^{\alpha_{2}} \rangle$, \ldots, $\langle p_{n}^{\alpha_{n}} \rangle$ are pairwise comaximal, the chinese remainder theorem yields $M \simeq \bigoplus_{i = 1}^{n} \mathrm{Ann}_{M}(p_{i}^{\alpha_{i}})$. The result now follows from theorem \eqref{maa}.
\end{proof}

\begin{theorem}\label{bholenath}
Let $M$ be a left torsion module over a principal ideal domain $R$. Then $M$ has a proper essential submodule if and only if $\mathrm{Ann}_{M}(p^{i}) \subsetneq \mathrm{Ann}_{M}(p^{j})$ for some some prime $p$ and for some positive integers $i, j$ with $i < j$. 
\end{theorem}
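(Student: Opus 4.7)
The plan is to reduce the problem to a single $p$-primary component and then invoke Theorem \eqref{maa}. Since $M$ is torsion over a PID, primary decomposition gives $M = \bigoplus_{p} \mathrm{Ann}^{\ast}_{M}(p)$ where $p$ ranges over the primes of $R$, and any element killed by a power of $p$ must lie entirely in the $p$-primary summand; hence $\mathrm{Ann}_{M}(p^{k})$ coincides with $\mathrm{Ann}_{\mathrm{Ann}^{\ast}_{M}(p)}(p^{k})$ for each $k \geq 1$. By Theorem \eqref{maa}, $M$ admits a proper essential submodule iff some summand $N := \mathrm{Ann}^{\ast}_{M}(p)$ does, so the whole statement reduces to proving the analogous equivalence for a single $p$-primary module $N$.

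For the ``only if'' direction I would argue by contrapositive. If the chain $\mathrm{Ann}_{N}(p) \subseteq \mathrm{Ann}_{N}(p^{2}) \subseteq \cdots$ never strictly grows, then $p$-primariness of $N$ forces $N = \bigcup_{k} \mathrm{Ann}_{N}(p^{k}) = \mathrm{Ann}_{N}(p)$, so $pN = 0$. Then $N$ is a vector space over the field $R/\langle p \rangle$ and in particular a semisimple $R$-module, so Proposition \eqref{history} rules out a proper essential submodule.

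For the ``if'' direction, given $\mathrm{Ann}_{N}(p^{i}) \subsetneq \mathrm{Ann}_{N}(p^{j})$ with $i < j$, I would propose the socle $E := \mathrm{Ann}_{N}(p)$ as the candidate proper essential submodule. Essentiality is the main observation: for any nonzero $n \in N$ the annihilator has the form $\langle p^{k} \rangle$ with $k \geq 1$, and then $p^{k-1}n$ is a nonzero element of $\langle n \rangle \cap E$. To see that $E$ is nonzero and proper, I would pick $m$ in the strict difference $\mathrm{Ann}_{N}(p^{j}) \setminus \mathrm{Ann}_{N}(p^{i})$; its annihilator is $\langle p^{k} \rangle$ for some $2 \leq k \leq j$, so $p^{k-1}m$ is a nonzero element of $E$, while $E = N$ would force $pN = 0$ and collapse the strict inclusion, a contradiction. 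The main obstacle is really just the initial bookkeeping, namely decomposing $M$ into primary components and translating between $\mathrm{Ann}_{M}(p^{k})$ and $\mathrm{Ann}_{N}(p^{k})$; once the socle $\mathrm{Ann}_{N}(p)$ is recognised as the natural candidate, each of the three required verifications is a one-line computation.
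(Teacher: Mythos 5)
Your proposal is correct and follows essentially the same route as the paper: reduce via the primary decomposition $M=\bigoplus_{p}\mathrm{Ann}^{\ast}_{M}(p)$ and Theorem \eqref{maa} to a single $p$-primary component, handle the ``only if'' direction by observing that a constant chain forces the component to be an $R/\langle p\rangle$-vector space (hence semisimple, so Proposition \eqref{history} applies), and exhibit a bounded $p$-power annihilator submodule as the essential submodule in the ``if'' direction. The only cosmetic differences are that you cite the primary decomposition as standard where the paper derives it from Lemma \eqref{jpm}, and you take the socle $\mathrm{Ann}_{N}(p)$ (checking properness separately) where the paper uses $\mathrm{Ann}_{M}(p^{i})$, which is proper automatically.
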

\begin{proof}
Let $m \in M$ be such that $rm = 0$ for some $r \in R$. Let $r = up_{1}^{\alpha_{1}}p_{2}^{\alpha_{2}} \cdots p_{n}^{\alpha_{n}}$ be the prime factorisation of $r$ into primes, where $u$ is some unit. Consider the cyclic submodule $\langle m \rangle$. As the submodule $\langle m \rangle$ is annihilated by the ideal $\langle r \rangle$, we can apply lemma \eqref{jpm} to conclude that $\langle m \rangle = \bigoplus_{i = 1}^{n} \mathrm{Ann}_{\langle m \rangle} (p_{i}^{\alpha_{i}})$. Thus, $M = \sum_{{p{\text -\mathrm{prime}}}} \mathrm{Ann}^{\ast}_{M}(p)$. Now choose finite number of primes $q_{1}, q_{2}, \ldots, q_{l}$ in $R$ and consider the submodules $ \mathrm{Ann}^{\ast}_{M}(q_{1}), \mathrm{Ann}^{\ast}_{M}(q_{2}), \ldots, \mathrm{Ann}^{\ast}_{M}(q_{l})$. If $m \in \mathrm{Ann}^{\ast}_{M}(q_{j}) \bigcap \sum_{i \neq j} \mathrm{Ann}^{\ast}_{M}(q_{i})$ for some fixed $j$ with $1 \leq j \leq l$, then we have $q_{j}^{s_{j}}m = 0$ for some $s_{j} \in \mathbb{N}$ and $m = m_{1} + \cdots +m_{j - 1} + m_{j + 1}+ \cdots + m_{l}$. Also, for all $1 \leq i \leq l$ with $i \neq j$, we have $q_{i}^{s_{i}}m_{i} = 0$ for some $s_{i} \in \mathbb{N}$. As $R$ is a principal ideal domain, there exists $x, y \in R$ such that $q_{j}^{s_{j}}x + \xi y = 1$, where $\xi = q_{1}^{s_{1}}q_{2}^{s_{2}} \cdots q_{j - 1}^{s_{j - 1}}q_{j + 1}^{s_{j + 1}} \cdots q_{l}^{s_{l}}$. But this implies $m = q_{j}^{s_{j}}xm + \xi y(m_{1} + \cdots +m_{j - 1} + m_{j + 1}+ \cdots + m_{l})$, that is, $m = 0$. Hence, we have that $M = \bigoplus_{{p{\text -\mathrm{prime}}}} \mathrm{Ann}^{\ast}_{M}(p)$. Now, if $M$ has a proper essential submodule $E$, then for some prime $p$, $E \cap \mathrm{Ann}^{\ast}_{M}(p)$ is proper essential in $\mathrm{Ann}^{\ast}_{M}(p)$ (see theorem \eqref{maa}). If $\mathrm{Ann}_{M}(p^{i}) = \mathrm{Ann}_{M}(p^{j})$ for all positive integers $i, j$, then observe that $\mathrm{Ann}_{R}(\mathrm{Ann}_{M}(p)) = \langle p \rangle$. Thus, using lemma \eqref{ram}, we conclude  $R/\langle p \rangle$-vector space $\mathrm{Ann}_{M}(p)$ has a proper essential subspace $E \cap \mathrm{Ann}_{M}(p)$. Again, applying the second part of lemma \eqref{ram}, we get a contradiction.

Conversely, assume that $\mathrm{Ann}_{M}(p^{i}) \subsetneq \mathrm{Ann}_{M}(p^{j})$ for some prime $p$ and for some positive integers $i, j$ with $i < j$. We claim that $\mathrm{Ann}_{M}(p^{i})$ is proper essential in $\mathrm{Ann}_{M}(p)$. Let $N$ be any submodule of $\mathrm{Ann}_{M}(p)$ and let $n \in N$. If $n \notin \mathrm{Ann}_{M}(p^{i})$, then suppose $l$ be the smallest positive integer such that $n \in \mathrm{Ann}_{M}(p^{l})$. Observe now that $p^{l - i}n \in \mathrm{Ann}_{M}(p^{i})$ and $p^{l - i}n \neq 0$. This implies $p^{l - i}n \in N \cap \mathrm{Ann}_{M}(p^{i})$. Now apply theorem \eqref{maa} to conclude that $M$ has a proper essential submodule.
\end{proof}

If $R$ is an integral domain, then a torsion-free left $R$-module $M$ contains a copy of $R$ as a left $R$-module. Also, the $R$-module $D^{-1}R$ has a proper essential submodule. Therefore, it is natural to consider a torsion-free $R$-module $M$ that contains $D^{-1}R$ as its submodule. In this regard, the question that arises is whether $M$ contains a proper essential submodule. The following theorem answers this question.

\begin{theorem}\label{jpmkj}
Let $M$ be a left torsion-free module over an integral domain $R$ and let $D$ denotes the set of all nonzero elements in $R$. If $M$ contains $D^{-1}R$ as its submodule, then the $R$-module $M$ has a proper essential submodule.
\end{theorem}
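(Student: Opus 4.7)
The plan is to exploit the fact that $K := D^{-1}R$, the field of fractions of $R$, is injective as an $R$-module. Once this is established, the copy of $K$ sitting inside $M$ will split off as a direct summand, giving $M = K \oplus N$, and it will suffice to produce a proper essential submodule inside the summand $K$; the conclusion will then follow from Theorem \ref{maa}.

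To verify injectivity of $K$, I would apply Baer's criterion. Given a nonzero ideal $I$ of $R$ and an $R$-linear map $f : I \to K$, I would pick any nonzero $a \in I$ and set $c = f(a)/a \in K$. Using the identity $b f(a) = f(ab) = a f(b)$ for every $b \in I$ and the fact that $K$ is a field (so $a$ is cancellable), one obtains $f(b) = bc$ for all $b \in I$. The map $R \to K$, $r \mapsto rc$, then extends $f$, so $K$ is injective. Hence the inclusion $K \hookrightarrow M$ admits an $R$-linear retraction $\pi : M \to K$, and setting $N = \ker \pi$ yields $M = K \oplus N$ as $R$-modules.

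Next I would check that $R$, viewed in the canonical way as an $R$-submodule of $K$, is proper essential in $K$. It is nontrivial since $1 \in R$, and proper since $R$ is not a field by the standing hypothesis of the paper. Essentiality is immediate: any nonzero element of $K$ has the form $r/s$ with $r,s \in R\setminus\{0\}$, and multiplying by $s$ produces the nonzero element $r \in R$; thus every nonzero cyclic $R$-submodule of $K$ meets $R$ nontrivially, whence so does every nonzero submodule of $K$. Applying Theorem \ref{maa} to the decomposition $M = K \oplus N$ then gives a proper essential submodule of $M$.

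The only substantive step is Baer's criterion for $K$; after that, the argument is a formal combination of injectivity (to split off $K$), the elementary essentiality of $R$ inside its field of fractions, and Theorem \ref{maa}. I expect no hidden obstacle, since all the structural work has already been done in the preceding sections.
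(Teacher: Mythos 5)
Your proof is correct, but it takes a genuinely different route from the paper's. The paper works entirely inside $M$: it fixes a nonzero $\frac{r}{d} \in D^{-1}R$, uses Zorn's lemma to pick a submodule $\mathbbm{E}$ maximal among those containing $R$ and avoiding $\frac{r}{d}$, and then shows by an element computation that $\mathbbm{E}$ is proper essential --- a computation that explicitly invokes the torsion-freeness of $M$ (to cancel $d(r^{\ast}-1)$ against $\alpha$). You instead observe that $K = D^{-1}R$ is injective over $R$ (your Baer-criterion verification is valid: $af(b) = f(ab) = bf(a)$ forces $f(b) = bc$ with $c = f(a)/a$), so the embedding $K \hookrightarrow M$ splits as $M = K \oplus N$; then $R$ is proper essential in $K$ (proper precisely because of the standing hypothesis that $R$ is not a field, which you correctly flag), and Theorem \ref{maa} transports this to $M$. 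Each approach buys something: the paper's argument is elementary and self-contained, needing only Zorn's lemma; yours is shorter modulo the standard injectivity fact, is more structural, and in fact never uses that $M$ itself is torsion-free, so it proves the slightly stronger statement that any module containing a copy of $D^{-1}R$ has a proper essential submodule --- which also streamlines the corollary the paper derives afterwards via $M/\mathrm{Tor}(M)$.
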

\begin{proof}
Let $\frac{r}{d} \in D^{-1}R$ be such that $\frac{r}{d} \neq 0_{M}$. Now, consider the collection  $\mathcal{C} = \{N \leq M \mid  R \subseteq N, \frac{r}{d} \notin N\}$ of submodules of $M$. Clearly $\mathcal{C}$ is a partially ordered set under inclusion. By Zorn's Lemma, it has a maximal element $\mathbbm{E}$. We claim that $\mathbbm{E}$ is a proper essential submodule of $M$. If not, there exists a non-trivial submodule $K$ that intersects $\mathbbm{E}$ trivially, that is, $\mathbbm{E} \cap K = \{0_{M}\}$. Observe that $K$ must not contain any nonzero element in $D^{-1}R$, because, if $\frac{r^{\prime}}{d^{\prime}} \in D^{-1}R$ is such that $\frac{r^{\prime}}{d^{\prime}} \neq 0_{M}$ and $\frac{r^{\prime}}{d^{\prime}} \in K$, then $r^{\prime} \in K$, contrary to our assumption that $\mathbbm{E} \cap K = \{0_{M}\}$. Let $\alpha \in K$ be such that $\alpha \neq 0_{M}$. Clearly, we have $\mathbbm{E} \cap \langle \alpha \rangle = \{0_{M}\}$. Consider the submodule $\mathbbm{E} \oplus \langle \alpha \rangle$. By maximality of $\mathbb{E}$, it must be that $\frac{r}{d} \in \mathbbm{E} \oplus \langle \alpha \rangle$. This implies $r/d + \alpha \in \mathbbm{E} \oplus \langle \alpha \rangle$. If $\frac{r}{d} + \alpha \in \mathbbm{E}$, then $r + d\alpha \in \mathbbm{E}$. But this gives, $d\alpha \in \mathbbm{E}$, a contradiction. Also, if $\frac{r}{d} + \alpha \in \langle \alpha \rangle$, then $r \in \langle \alpha \rangle$, a contradiction. Thus, we can write $\frac{r}{d} + \alpha = e + r^{\ast}\alpha$, where $e \in \mathbbm{E} \setminus \{0_{M}\}$ and $r^{\ast} \in R \setminus \{0_{R}\}$. But this implies $r + d\alpha = de + dr^{\ast}\alpha$. Hence, we get that $r - de = d(r^{\ast} - 1)\alpha$. Observe that $r - de \in \mathbbm{E}$ whereas $d(r^{\ast} - 1)\alpha \in \langle \alpha \rangle$. Hence, it must be that $r - de = 0_{M}$ and $d(r^{\ast} - 1) = 0_{R}$. As $d \neq 0_{R}$, it must be that $r^{\ast} = 1$. Plugging the value of $r^{\ast}$ in the equation $\frac{r}{d} + \alpha = e + r^{\ast}\alpha$ yields $\frac{r}{d} = e$. But this is absurd because $\mathbbm{E}$ being a member of $\mathcal{C}$ cannot contain $\frac{r}{d}$.  
\end{proof}

Another question that arises naturally from the definition of proper essential submodule is: if $M$ has a proper essential submodule, does the quotient module $M/N$ also have a proper essential submodule where $N$ is any proper non-trivial submodule of $M$? In general, the answer is no (example: $\mathbb{Z}$-module $\mathbb{Z}_{p^{2}}$). But under certain constraints, we can give an affirmative answer. The converse, however, always holds.

\begin{lemma}\label{gmkj}
Let $M$ be a left $R$-module and $N$ be a submodule of $M$ such that $M/N$ has a proper essential submodule $E/N$. Then, the submodule $E$ is a proper essential submodule of $M$. Moreover, if a module $M$ has a proper essential submodule $E$ and if $N$ is a submodule of $M$ such that $M/N$ is torsion-free and $N \neq E$, then the submodule $(E + N)/N$ is proper essential in $M/N$.
\end{lemma}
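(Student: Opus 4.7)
The plan is to verify each assertion by checking the three ingredients of ``proper essential''---nontriviality, properness, and essentiality---in turn.

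For the first assertion, since $E/N$ is a proper nontrivial submodule of $M/N$, the correspondence theorem gives $N \subseteq E \subsetneq M$ together with $E \supsetneq N$; in particular $E \neq M$ and $E \neq \{0_M\}$. For essentiality I would take an arbitrary nontrivial submodule $K$ of $M$ and split into cases. If $K \subseteq N$, then $K \subseteq E$ and $E \cap K = K \neq \{0_M\}$. Otherwise $(K+N)/N$ is a nontrivial submodule of $M/N$, so essentiality of $E/N$ produces $\overline{x} \in (E/N) \cap (K+N)/N$ with $\overline{x} \neq 0$; writing a representative as $x = k+n$ with $k \in K$ and $n \in N \subseteq E$, the element $k = x-n$ lies in $E \cap K$ and is nonzero, since $k = 0$ would force $x = n \in N$, contradicting $\overline{x} \neq 0$.

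For the second assertion, essentiality is the cleanest step and the place where the torsion-free hypothesis enters. Given a nonzero $\overline{m} = m+N \in M/N$, essentiality of $E$ in $M$ produces $r \in R \setminus \{0\}$ with $rm \in E$; since $m \notin N$, torsion-freeness of $M/N$ forces $rm \notin N$, so $r\overline{m} = rm+N$ is a nonzero element of $\langle \overline{m} \rangle \cap (E+N)/N$. The same mechanism shows $E \not\subseteq N$, whence $(E+N)/N$ is nontrivial: if $E \subseteq N$, any $m \in M \setminus N$ would yield $rm \in E \subseteq N$ with $r \neq 0$, contradicting torsion-freeness.

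The main obstacle I anticipate is the properness claim $E + N \neq M$. The case $N \subseteq E$ (so $N \subsetneq E$ by hypothesis) is immediate since then $E + N = E \neq M$. For the remaining case $N \not\subseteq E$, one must rule out $E + N = M$; my plan would be to exploit the isomorphism $M/N \cong E/(E \cap N)$ that $E + N = M$ would force, combined with the essentiality of $E \cap N$ in $N$ (which follows from $E$ being essential in $M$) and the fact that $E$ is proper essential, seeking a contradiction via the resulting decomposition $M/(E \cap N) \cong E/(E \cap N) \oplus N/(E \cap N)$, which would exhibit a direct complement incompatible with $E$ being essential.
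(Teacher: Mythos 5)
Your first assertion is proved correctly and by essentially the same route as the paper: you pull a nonzero coset of $E/N$ back to a nonzero element of $E \cap K$, using $N \subseteq E$. In fact you are more careful than the paper, which does not separate out the case $K \subseteq N$. Likewise, your essentiality argument for the second assertion is exactly the paper's: essentiality of $E$ produces $rm \in E$ with $rm \neq 0_{M}$, and torsion-freeness of $M/N$ forces $rm \notin N$, so $rm + N$ is a nonzero element of $\big((E+N)/N\big) \cap \big(\langle m \rangle + N\big)/N$.

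The gap is the properness step that you yourself flag, and it cannot be closed: when $N \not\subseteq E$ the equality $E + N = M$ really can occur under the stated hypotheses, so no argument will produce the contradiction you are hoping for. Take $R = \mathbb{Z}$, $M = \mathbb{Z}^{2}$, $E = \{(a,b) : a + b \in 2\mathbb{Z}\}$ and $N = \mathbb{Z}(1,0)$. Then $E$ is proper essential in $M$ (for nonzero $(a,b)$ one has $0 \neq 2(a,b) \in E$), $M/N \cong \mathbb{Z}$ is torsion-free, and $N \neq E$; yet $(0,1) = (1,1) - (1,0) \in E + N$ and $(1,0) \in N$, so $E + N = M$ and $(E+N)/N = M/N$ is not proper. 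In particular the splitting $M/(E \cap N) \cong E/(E \cap N) \oplus N/(E \cap N)$ that you planned to contradict does exist here and is compatible with $E$ being essential in $M$ (essentiality does not descend to that quotient). For what it is worth, the paper's own proof never addresses properness either: it only verifies that $(E+N)/N$ meets every nontrivial submodule of $M/N$. So the defect lies in the statement rather than in your strategy: the second assertion is correct only if ``essential'' is read in the non-proper sense, or under an additional hypothesis such as $N \subseteq E$ (or, what is really needed, $E + N \neq M$), in which case your case analysis finishes the job, since then $(E+N)/N = E/N \subsetneq M/N$.
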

\begin{proof}
If $N = \{0_{M}\}$, there is nothing to show. Assume $N \neq \{0_{M}\}$. Let $L$ be a nontrivial proper submodule of $M$ and $u \in (L + N)/N \cap E/N$. Clearly, $u = l + N = e + N$ for some $l \in L \setminus N$ and $e \in E \setminus N$. But this shows that $l \in E$. Hence, we get that $E$ is proper essential in $M$. 

If $N = \{0_{M}\}$, there is nothing to show. Therefore, assume $N \neq \{0_{M}\}$. Let $L/N$ be a non-trivial proper submodule of $M/N$. Let $l \in L \setminus N$ be such $rl \in E$, for some nonzero $r \in R$ and $rl \neq 0_{M}$. As $rl \notin N$, we conclude that $rl + N \in (E + N)/N \cap L/N$, that is, $(E + N)/N$ is proper essential in $M/N$.
\end{proof}

\begin{corollary}
Let $M$ be a left module over an integral domain $R$ that contains $D^{-1}R$ as its submodule and $M \neq \mathrm{Tor}(M)$. Then, the $R$-module $M$ has a proper essential submodule.
\end{corollary}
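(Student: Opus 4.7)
The plan is to reduce to Theorem \eqref{jpmkj} by passing to the torsion-free quotient $M/\mathrm{Tor}(M)$ and then pulling the resulting proper essential submodule back via Lemma \eqref{gmkj}. Set $T = \mathrm{Tor}(M)$; the hypothesis $M \neq \mathrm{Tor}(M)$ ensures that $M/T$ is a nonzero torsion-free $R$-module, which is the setting in which Theorem \eqref{jpmkj} applies.

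Next I would verify that $M/T$ still contains an isomorphic copy of $D^{-1}R$. The key observation is that $D^{-1}R$ is itself torsion-free over $R$: if $a \cdot (r/d) = 0$ for some nonzero $a \in R$ and $r/d \in D^{-1}R$, then $ar = 0$ in the integral domain $R$, forcing $r = 0$ and hence $r/d = 0$. Transferring this through the given inclusion $D^{-1}R \hookrightarrow M$ gives $D^{-1}R \cap T = \{0_{M}\}$, so the composition of the inclusion $D^{-1}R \hookrightarrow M$ with the quotient map $M \rightarrow M/T$ is an injective $R$-module homomorphism. Theorem \eqref{jpmkj} applied to $M/T$ then furnishes a proper essential submodule of $M/T$, which must be of the form $E/T$ for some submodule $T \subseteq E \subsetneq M$. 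Finally, the first assertion of Lemma \eqref{gmkj} immediately promotes $E$ to a proper essential submodule of $M$.

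There is no serious obstacle here: the argument is essentially a gluing of Theorem \eqref{jpmkj} with the first part of Lemma \eqref{gmkj}. The only point that requires any care is the injectivity of the induced map from $D^{-1}R$ into $M/T$, which is precisely what the elementary torsion-freeness computation above ensures.
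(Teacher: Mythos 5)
Your argument is correct and is essentially the paper's own proof: pass to $M/\mathrm{Tor}(M)$, apply Theorem \eqref{jpmkj}, and pull the proper essential submodule back with the first part of Lemma \eqref{gmkj}. In fact you are slightly more careful than the paper, which invokes Theorem \eqref{jpmkj} on $M/\mathrm{Tor}(M)$ without noting that one must check the copy of $D^{-1}R$ survives in the quotient — exactly the injectivity verification (via torsion-freeness of $D^{-1}R$, so $D^{-1}R \cap \mathrm{Tor}(M) = \{0_{M}\}$) that you supply.
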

\begin{proof}
Consider the quotient module $M/\mathrm{Tor}(M)$. As $M/\mathrm{Tor}(M)$ is torsion-free, so applying theorem \eqref{jpmkj} we get that $M/\mathrm{Tor}(M)$ has a proper essential submodule. Again, applying lemma \eqref{gmkj}, we see that $M$ has a proper essential submodule.
\end{proof}
The correspondence between elements in $\mathrm{Hom}_{R}(M, N)$ with matrices over $R$ plays a fundamental role in the theory of modules. Keeping this in mind, the most natural question that we investigate is whether the set of matrices (of some dimension) over $R$ (as an $R$-module) has a proper essential submodule. We obtain an affirmative answer in the following theorem. Let $\mathcal{M}_{m \times n}(R)$ denote the set of all $m \times n$ matrices over $R$.

\begin{theorem}
Let $R$ be an integral domian. Then the left $R$-module $\mathcal{M}_{m \times n}(R)$ possess a proper essential submodule.
\end{theorem}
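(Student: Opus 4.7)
The plan is to reduce the statement to Theorem \eqref{spm} by recognizing $\mathcal{M}_{m \times n}(R)$ as a free left $R$-module. First I would note that the matrix units $\{E_{ij} : 1 \le i \le m,\, 1 \le j \le n\}$, where $E_{ij}$ has a $1_{R}$ in the $(i,j)$-entry and $0_{R}$ elsewhere, form a basis of $\mathcal{M}_{m \times n}(R)$ over $R$: since scalar multiplication acts entrywise, every matrix $A = (a_{ij}) \in \mathcal{M}_{m \times n}(R)$ admits the unique expression $A = \sum_{i,j} a_{ij} E_{ij}$ with coefficients in $R$, and this decomposition is plainly $R$-linear and uniquely determined because $R$ is an integral domain with identity.

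Hence the entrywise coordinate map furnishes an $R$-module isomorphism
\[
\mathcal{M}_{m \times n}(R) \simeq R(A), \qquad A = \{(i,j) : 1 \le i \le m,\, 1 \le j \le n\},
\]
where $A$ is a nonempty finite indexing set of cardinality $mn$. Equivalently, $\mathcal{M}_{m \times n}(R) \simeq \bigoplus_{A} R$ as left $R$-modules.

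Having established this identification, the conclusion follows immediately from Theorem \eqref{spm}: the free $R$-module $R(A)$ on the nonempty set $A$ possesses a proper essential submodule, so the isomorphic module $\mathcal{M}_{m \times n}(R)$ does as well. There is essentially no obstacle here; the whole content of the theorem is the standard observation that $\mathcal{M}_{m \times n}(R)$ is free of finite rank $mn$ over $R$, after which Theorem \eqref{spm} delivers the result. As an alternative route, one could invoke Theorem \eqref{maa} directly, together with the fact that $R$ (which, by the standing assumption, is an integral domain that is not a field) contains a proper nonzero principal ideal $\langle r \rangle$, and any such nontrivial proper ideal is a proper essential submodule of $R$ viewed as an $R$-module, as already observed in the paragraph preceding Proposition \eqref{history}.
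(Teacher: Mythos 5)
Your proof is correct and follows essentially the same route as the paper: both identify $\mathcal{M}_{m \times n}(R)$ with the free module $R^{mn}$ and then invoke Theorem \eqref{spm} (the paper merely reaches the isomorphism via $\mathrm{Hom}_{R}(R^{n},R^{m})$ rather than by exhibiting the matrix units directly). The only cosmetic slip is your remark that uniqueness of the coefficients relies on $R$ being an integral domain -- it holds over any ring with identity -- but this does not affect the argument.
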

\begin{proof}
Let $M$ and $N$ are the free $R$ modules on the sets of $n$ and $m$ elements respectively. Clearly, we have $M \simeq R^{n}$ and $N \simeq R^{m}$. As $\mathrm{Hom}_{R}(R^{n}, R^{m}) \simeq \mathrm{Hom}_{R}(M, N)$ and $\mathrm{Hom}_{R}(M, N) \simeq \mathcal{M}_{m \times n}(R)$, we get that $\mathrm{Hom}_{R}(R^{n}, R^{m}) \simeq \mathcal{M}_{m \times n}(R)$. Since we have $\mathrm{Hom}_{R}(R^{n}, R^{m}) \simeq \bigoplus_{i = 1}^{n} \bigoplus_{j = 1}^{m} \mathrm{Hom}_{R}(R, R)$, we obtain $\mathcal{M}_{m \times n}(R) \simeq R^{mn}$. Now, we can apply theorem \eqref{spm} to obtain the desired result.
\end{proof}

\section{Essential socles in a new class of modules}

\begin{definition}
A left $R$-module $M$ is said to be an $\mathrm{SM}$ module if every nontrivial submodule of $M$ contains a simple submodule.
\end{definition}

\begin{proposition}
Let $M$ be a left module over $R$ such that $M$ is either Artinian, or, locally finite, or, is of finite length. Then $M$ is an $\mathrm{SM}$ module.
\end{proposition}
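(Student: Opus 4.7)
The plan is to reduce all three hypotheses to the basic fact that an Artinian module with nonzero underlying set contains a simple submodule. So I would first isolate the following observation and prove it once: if $A$ is any nonzero Artinian left $R$-module, then the family $\mathcal{F}$ of nonzero submodules of $A$ is nonempty (it contains $A$ itself), hence by the descending chain condition has a minimal element $S$; by minimality $S$ has no nonzero proper submodule, so $S$ is simple.

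With that lemma in hand, I would take an arbitrary nontrivial submodule $N$ of $M$ and, in each of the three cases, produce a nonzero Artinian submodule contained in $N$. For the Artinian case this is immediate: $N$ itself is Artinian as a submodule of an Artinian module, so the observation applied to $A=N$ yields a simple submodule of $N$. For the finite length case, every submodule of a module of finite length is again of finite length, hence Artinian, so exactly the same argument applies with $A=N$.

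For the locally finite case I would pick any nonzero $x \in N$ and look at the cyclic (hence finitely generated) submodule $\langle x \rangle \subseteq N$; under the standard reading of locally finite (every finitely generated submodule has finite length, equivalently is Artinian, which also covers the variant ``every finitely generated submodule is a finite set'') the module $\langle x \rangle$ is a nonzero Artinian submodule of $N$, and the observation gives a simple submodule inside $\langle x \rangle \subseteq N$.

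The only mild obstacle is fixing the meaning of \emph{locally finite} used in the paper, but in every standard version the cyclic submodule generated by a nonzero element is Artinian, so the reduction goes through uniformly and the argument is essentially a one-line application of the DCC in each case.
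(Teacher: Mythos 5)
Your argument is correct. The underlying mechanism is the same as the paper's -- in every case a minimal element of a suitable family of nonzero submodules is produced and observed to be simple -- but you organize it along a genuinely different route: you isolate the single lemma that every nonzero Artinian module contains a simple submodule (minimal nonzero submodule via DCC), and then reduce each of the three hypotheses to it by exhibiting, inside an arbitrary nontrivial submodule $N$, a nonzero Artinian submodule ($N$ itself in the Artinian and finite-length cases, the cyclic submodule $\langle x \rangle$ of a nonzero $x \in N$ in the locally finite case). The paper instead runs three parallel ad hoc minimality arguments: a terminating strictly descending chain for the Artinian case, a submodule of least cardinality inside the finite cyclic module $\langle n \rangle$ for the locally finite case, and a nontrivial submodule of least length for the finite-length case. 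Your reduction buys economy and a reusable lemma, and it sidesteps the slight awkwardness in the paper's phrasing (e.g.\ choosing ``$S$ of least cardinality among all submodules of $\langle n \rangle$'', which literally would be the zero submodule, and the informal ``chain that terminates'' formulation); the paper's version, in exchange, keeps each case self-contained and elementary without invoking the notion of Artinian module outside its own case. One small caution: your parenthetical ``finite length, equivalently Artinian'' for finitely generated modules is not needed and is best dropped -- all you use, and all that is true in general, is that finite length (or finiteness as a set) implies Artinian. Your hedge about the meaning of ``locally finite'' is apt, and under the paper's reading (cyclic submodules are finite sets) your argument goes through verbatim.
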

\begin{proof}
Let $M$ be an Artinian module. If $M$ is simple there is nothing to show. Therefore assume $M$ is not simple. Let $N$ be any submodule of $M$. If we consider any strictly descending chain $C_{N}: N_{0} = N \supsetneq N_{1} \supseteq N_{2} \supsetneq \cdots \supsetneq N_{s}$ of submodules of $N$ that terminates with the nontrivial proper submodule $N_{s}$, then $N_{s}$ is a simple module contained in $N$. Because, otherwise one can enlarge the chain $C_{N}$.

Now assume $M$ is a locally finite module. Again, if $M$ is simple then $M$ must be finite and there is nothing to show. So, assume $M$ is not simple. Let $N$ be any submodule of $M$ and let $n \in N$. Let $\mathcal{G}$ be the collection of all submodules of $\langle n \rangle$. As $\langle n \rangle$ is finite, we can choose $S \in \mathcal{G}$ such that $|S|$ is least. Clearly, $S$ can not contain any nontrivial proper submodule $L$, because then $L$ being a member of $\mathcal{G}$ contradicts the minimality of $|S|$. This shows that $S$ is a simple submodule contained in $N$.

Finally, assume that the length of $M$ is finite. If $l(M) = 1$, then $M$ is simple and there is nothing to show. So, assume $l(M) > 1$. Let $N$ be any submodule of $M$. Let $\mathcal{H}$ be the collection of all nontrivial proper submodule of $N$. Choose $K \in \mathcal{H}$ such that $l(K)$ is minimal. If $K$ would not be simple and $P$ is a nontrivial proper submodule of $K$, then $l(P) < l(K)$ contradicts the minimality of $l(K)$. Thus, $K$ is a simple submodule contained in $N$.
\end{proof}

\begin{theorem}
The class of left $\mathrm{SM}$ modules properly contains the class of Artinian modules, locally finite modules, and modules of finite lengths.
\end{theorem}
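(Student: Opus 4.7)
The preceding proposition already gives the three inclusions $\mathrm{Artinian} \subseteq \mathrm{SM}$, $\mathrm{locally\ finite} \subseteq \mathrm{SM}$, and $\mathrm{finite\ length} \subseteq \mathrm{SM}$, so the content of the theorem is only that each of them is strict. The plan is to exhibit a single $\mathrm{SM}$ module that simultaneously fails all three finiteness conditions, which then settles the three strictness statements at once.

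The natural place to search is among semisimple modules, since any semisimple module is automatically $\mathrm{SM}$: every nontrivial submodule is itself a direct sum of simples and hence contains a simple one. I want an infinite direct sum $\bigoplus_{i \in \mathbb{N}} S_{i}$ of simple modules in which each $S_{i}$ is infinite as a set. The infinite index set will defeat Artinianness and finite length, while the infinitude of a single summand will defeat local finiteness. This forces the base ring to admit infinite simple modules, which excludes $\mathbb{Z}$ (every simple $\mathbb{Z}$-module is $\mathbb{Z}/p\mathbb{Z}$ and hence finite). A convenient choice is $R = \mathbbm{k}[x]$ with $\mathbbm{k}$ an infinite field (so $R$ is not a field, consistent with the standing assumption), and
\[
M \;=\; \bigoplus_{i \in \mathbb{N}} R/(x),
\]
where $(x)$ is maximal in $R$, so each summand $R/(x) \cong \mathbbm{k}$ is simple.

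Verification of the four required properties is then routine. Semisimplicity of $M$ yields the $\mathrm{SM}$ property via the observation above. The strictly descending chain
\[
M \;\supsetneq\; \bigoplus_{i \geq 2} R/(x) \;\supsetneq\; \bigoplus_{i \geq 3} R/(x) \;\supsetneq\; \cdots
\]
shows that $M$ is not Artinian; $M$ has infinite length as an infinite direct sum of nonzero submodules; and the cyclic submodule generated by any nonzero element supported in a single summand is isomorphic to $\mathbbm{k}$, which is an infinite set, so $M$ is not locally finite. The only step that requires genuine thought is recognising that local finiteness is orthogonal to the other two conditions (a simple module of infinite cardinality is simultaneously Artinian and of finite length, yet fails local finiteness), which is what forces the base ring to be one whose residue fields are infinite; once that observation is in hand, the construction is essentially canonical.
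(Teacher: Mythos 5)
Your proof is correct, but it takes a different route from the paper. You settle all three strictness claims with a single semisimple witness, $M=\bigoplus_{i\in\mathbb{N}}\mathbbm{k}[x]/(x)$ over $R=\mathbbm{k}[x]$ with $\mathbbm{k}$ infinite, getting the $\mathrm{SM}$ property for free from semisimplicity (every nonzero submodule of a semisimple module is semisimple, hence contains a simple submodule), and then checking non-Artinian, infinite length, and failure of local finiteness directly; all four verifications are sound, and your side remark that $\mathbb{Z}$ cannot work (its simple modules are finite) is also accurate. The paper instead uses two witnesses: the free $\mathbb{Z}_{p^{n}}$-module $\bigoplus_{\mathbb{N}}\mathbb{Z}_{p^{n}}$ with $n>1$, shown to be $\mathrm{SM}$ by locating a subgroup of order $p$ inside each cyclic submodule, which handles the finite-length and Artinian separations; and an infinite module of finite length (a finite-dimensional vector space over an infinite field), which is $\mathrm{SM}$ by the preceding proposition but not locally finite. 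The trade-offs: your construction is more economical (one example, one verification of $\mathrm{SM}$-ness via semisimplicity) but requires a base ring with infinite simple modules and produces a module that is $\mathrm{SM}$ for the "trivial" reason that $\mathrm{Soc}(M)=M$; the paper's first example is non-semisimple and lives over a quotient of $\mathbb{Z}$, so it additionally illustrates that the $\mathrm{SM}$ class is interesting beyond the semisimple case (which is the situation relevant to the paper's later theorem on essential socles of non-semisimple $\mathrm{SM}$ modules), at the cost of needing two separate examples.
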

\begin{proof}
Let $\mathcal{C}_{1}, \mathcal{C}_{2}, \mathcal{C}_{3}$, and $\mathcal{C}_{4}$ denote the class of all $\mathrm{SM}$ modules, modules of finite lengths, locally finite modules, and Artinian modules respectively. Consider the free $\mathbb{Z}_{p^{n}}$-module $M = \bigoplus_{\mathbb{N}} \mathbb{Z}_{p^{n}}$ on the set $\mathbb{N}$, where $p$ is some fixed prime and $n$ is a natural number such that $n > 1$. Let $N$ be any nontrivial proper submodule of $M$ and let $n \in N$. Observe that $|\langle n \rangle| \leq p^{n}$. If we choose a submodule $L$ of $\langle n \rangle$ such that $|L| = p$, then clearly $L$ is simple and contained in $N$. This shows that $M$ is an $\mathrm{SM}$ module.

As $l(\mathbb{Z}_{p^{n}}) = n$, therefore the divergence of the series $\sum_{i = 1}^{\infty} n$  shows that the length of the free $\mathbb{Z}_{p^{n}}$-module $\bigoplus_{\mathbb{N}} \mathbb{Z}_{p^{n}}$ is not finite. Hence, we have that $\mathcal{C}_{2} \subsetneq \mathcal{C}_{1}$.

For each natural number $i$, we define the submodules $L_{i} = \bigoplus_{\mathbb{N}} F_{j}$, where $F_{j} = \mathbb{Z}_{p^{n}}$ if $j > i$, otherwise, $F_{j} = \{0_{\mathbb{Z}_{p^{n}}}\}$. Now, observe that the strictly descending chain of submodules $L_{0} = M \supsetneq L_{1} \supsetneq L_{2} \supsetneq \cdots L_{s} \supsetneq \cdots$ do not terminate. Thus, $\mathcal{C}_{4} \subsetneq \mathcal{C}_{1}$.

Now, consider an infinite $R$-module $W$ of finite length $n$ (for example finite dimensional vector space). If $W_{0} = W \supsetneq W_{1} \supsetneq W_{2} \supseteq \cdots \supsetneq W_{n} = \{0_{W}\}$, then observe that $W = R(A)$, where $A = \{w_{i} \mid w_{i} \in W_{i} \setminus W_{i + 1}, 0 \leq i \leq n - 2\}$ and $R(A)$ is the module generated by the subset $A$. As $W \notin \mathcal{C}_{3}$, therefore, we get that $\mathcal{C}_{3} \subsetneq \mathcal{C}_{1}$.
\end{proof}

\begin{theorem}
Let $M$ be a left $\mathrm{SM}$ module over $R$. Then $\mathrm{Soc}(M)$ is proper essential in $M$ if and only if $M$ is not semisimple.
\end{theorem}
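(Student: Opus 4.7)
The statement is essentially a clean consequence of the SM hypothesis combined with the characterization of semisimplicity from Theorem \ref{baba}. Both directions rely on interpreting $\mathrm{Soc}(M)$ as the sum of all simple submodules of $M$.

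For the forward direction, if $\mathrm{Soc}(M)$ is proper essential in $M$, then by Definition \ref{df} it is a proper submodule, so $\mathrm{Soc}(M) \neq M$. By the equivalence $(\mathrm{c}) \Leftrightarrow (\mathrm{d})$ of Theorem \ref{baba}, this forces $M$ to be non-semisimple. This direction does not use the SM hypothesis at all.

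For the converse, suppose $M$ is not semisimple. First I would check that $\mathrm{Soc}(M)$ is a proper nontrivial submodule. It is proper by Theorem \ref{baba} since $M$ is non-semisimple, and it is nontrivial because applying the SM property to the nontrivial submodule $M$ itself yields at least one simple submodule, which lies inside $\mathrm{Soc}(M)$. Next, to verify essentiality, let $N$ be any nontrivial submodule of $M$. Because $M$ is an SM module, $N$ contains a simple submodule $S$. Since $S$ is simple it is contained in $\mathrm{Soc}(M)$ by definition, and since $S \subseteq N$ we obtain $\{0_{M}\} \neq S \subseteq \mathrm{Soc}(M) \cap N$. This shows $\mathrm{Soc}(M)$ meets every nontrivial submodule of $M$ nontrivially, so $\mathrm{Soc}(M)$ is proper essential in $M$.

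There is no genuine obstacle here; the argument is a direct unpacking of the definitions once one has Theorem \ref{baba} and the fact that $\mathrm{Soc}(M)$ contains every simple submodule of $M$. The only subtlety worth flagging is ensuring that $\mathrm{Soc}(M)$ is nontrivial in the converse, which is precisely where the SM hypothesis enters.
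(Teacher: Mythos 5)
Your proof is correct and follows essentially the same route as the paper: the forward direction is the observation that a proper essential socle forces $\mathrm{Soc}(M) \subsetneq M$, hence non-semisimplicity, and the converse uses the SM hypothesis to see that $\mathrm{Soc}(M)$ meets every nontrivial submodule nontrivially. The only (cosmetic) difference is that you obtain properness and nontriviality of $\mathrm{Soc}(M)$ directly from the equivalence $(\mathrm{c}) \Leftrightarrow (\mathrm{d})$ of Theorem \ref{baba} and the SM property applied to $M$ itself, whereas the paper gets properness by noting that $\mathrm{Soc}(M)$ lies inside a proper essential submodule of $M$; both justifications are valid, and yours is if anything slightly more self-contained.
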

\begin{proof} 
Assume first that $\mathrm{Soc}(M)$ is proper essential in $M$. Then, clearly we have $\mathrm{Soc}(M) \subsetneq M$. Hence, applying theorem \eqref{babama}, we conclude that $M$ is not semisimple.

Conversely, assume that $M$ is not semisimple. Again, applying theorem \eqref{babama}, we see that $M$ contains a proper essential submodule. Now, let $N$ be any submodule of $M$. As $N$ contains a simple submodule, we get that $\mathrm{Soc}(M) \cap N \neq \{0_{M}\}$. Further, as $\mathrm{Soc}(M)$ is contained in every proper essential submodule of $M$, we conclude that $\mathrm{Soc}(M)$ is proper essential in $M$.  
\end{proof}

\section{Torsion-free modules over integral domains}

In general, it is not clear from theorem \eqref{jpmkj} that under what condition a left torsion-free module $M$ over an integral domain $R$ contains an essential submodule. For this purpose, we define the closure of a submodule $N$ of $M$ that gives some sort of measure in the sense that it is that maximal subspace of $M$ in which $N$ is essential. 

\begin{definition}\label{madurga}
Let $M$ be a left torsion-free module over an integral domain $R$. For any submodule $N$ of $M$, we denote $\mathrm{Cl}_{M}(N)$ to mean the closure of $N$ in $M$ defined in the following way: $\mathrm{Cl}_{M}(N) = \{m \in M \mid rm \in N, \text{for some nonzero $r \in R$}\}$.  
\end{definition}

Throughout this section, unless otherwise stated, we assume $M$ is a left torsion-free module over an integral domain $R$ and $D$ is the set of nonzero elements in $R$. The following proposition is quite straightforward and follows directly from definition \eqref{madurga}.  

\begin{proposition}\label{debanandamaharaj}
Let $N, N_{1}, N_{2}, T$ are submodules of $M$ and let $f: M \rightarrow S$ be any homomorphism of modules. Then, we have the following:

$\mathrm{(a)}$ $\mathrm{Cl}_{M}(\{0_{M}\}) = \{0_{M}\}$ and $\mathrm{Cl}_{M}(N)$ is a submodule of $M$.

$\mathrm{(b)}$ $N \subseteq \mathrm{Cl}_{M}(N)$ and $\mathrm{Cl}_{M}(\mathrm{Cl}_{M}(N)) = \mathrm{Cl}_{M}(N)$.

$\mathrm{(c)}$ If $N_{1} \subseteq N_{2}$, then $\mathrm{Cl}_{M}(N_{1}) \subseteq \mathrm{Cl}_{M}(N_{2})$.

$\mathrm{(d)}$ $f(\mathrm{Cl}_{M}(T)) \leq \mathrm{Cl}_{S}(f(T))$.
\end{proposition}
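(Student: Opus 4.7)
The plan is to verify each of the four items directly from the definition of $\mathrm{Cl}_{M}(N)$, leaning on two recurring ingredients: the torsion-freeness of $M$ (so that $rm = 0_{M}$ with $r \neq 0_{R}$ forces $m = 0_{M}$) and the integral-domain hypothesis on $R$ (so that products of nonzero scalars remain nonzero and serve as legitimate witnesses for membership in a closure).

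First I would dispatch (a). For the equality $\mathrm{Cl}_{M}(\{0_{M}\}) = \{0_{M}\}$, any $m$ in the closure satisfies $rm = 0_{M}$ for some $r \neq 0_{R}$, and torsion-freeness immediately gives $m = 0_{M}$. To see that $\mathrm{Cl}_{M}(N)$ is a submodule, I would take $m_{1}, m_{2} \in \mathrm{Cl}_{M}(N)$ with witnesses $r_{1}, r_{2} \in R \setminus \{0_{R}\}$ such that $r_{i}m_{i} \in N$; since $R$ is a domain, $r_{1}r_{2} \neq 0_{R}$, and
\[
(r_{1}r_{2})(m_{1} - m_{2}) = r_{2}(r_{1}m_{1}) - r_{1}(r_{2}m_{2}) \in N,
\]
so $m_{1} - m_{2} \in \mathrm{Cl}_{M}(N)$. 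Closure under scalar multiplication is then immediate from $r(sm) = s(rm) \in N$ for any $s \in R$.

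For (b), the inclusion $N \subseteq \mathrm{Cl}_{M}(N)$ is witnessed by the scalar $1 \in R$. The nontrivial inclusion $\mathrm{Cl}_{M}(\mathrm{Cl}_{M}(N)) \subseteq \mathrm{Cl}_{M}(N)$ I would obtain by composing witnesses: if $rm \in \mathrm{Cl}_{M}(N)$ and $s(rm) \in N$ with $r, s \in R \setminus \{0_{R}\}$, then $(sr)m \in N$ with $sr \neq 0_{R}$ by the domain property. Part (c) is immediate, since any $r$ witnessing $rm \in N_{1}$ automatically witnesses $rm \in N_{2}$. For (d), given $m \in \mathrm{Cl}_{M}(T)$ with $rm \in T$, I would simply apply $f$ to obtain $r f(m) = f(rm) \in f(T)$, so $f(m) \in \mathrm{Cl}_{S}(f(T))$; this step implicitly requires $S$ to be torsion-free over $R$ so that the closure operation in $S$ is defined as in Definition \ref{madurga}.

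The only step meriting genuine care is the repeated invocation of the integral-domain hypothesis to ensure that each newly formed scalar such as $r_{1}r_{2}$ or $sr$ stays nonzero; this is the sole mild obstacle, and beyond this routine bookkeeping there is no real difficulty.
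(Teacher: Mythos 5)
Your proposal is correct and follows essentially the same argument as the paper: each part is verified directly from Definition \ref{madurga}, using torsion-freeness for $\mathrm{Cl}_{M}(\{0_{M}\}) = \{0_{M}\}$ and the integral-domain hypothesis to keep product witnesses such as $r_{1}r_{2}$ and $sr$ nonzero, with (d) obtained by applying $f$ to a witness relation. Your side remark that $\mathrm{Cl}_{S}$ presupposes the standing torsion-freeness convention on $S$ is a fair observation but does not change the proof.
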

\begin{proof}
$\mathrm{(a)}:$ $\mathrm{Cl}_{M}(\{0_{M}\})$ is the set of all those elements $m$ in $M$ such that $rm = 0$ for some nonzero $r \in R$. As $M$ is torsion-free, we see that $\mathrm{Cl}_{M}(\{0_{M}\}) = \{0_{M}\}$. If $m_{1}, m_{2}$ are in $\mathrm{Cl}_{M}(N)$, then $r_{1}m_{1} \in N$ and $r_{2}m_{2} \in N$ for some nonzero elements $r_{1}, r_{2}$ in $R$. Since $r_{1}r_{2}(m_{1} - m_{2}) \in N$ and $r_{1}r_{2} \neq 0$, we conclude that $(m_{1} - m_{2}) \in \mathrm{Cl}_{M}(N)$. Now, let $m \in \mathrm{Cl}_{M}(N)$ and $r \in R$ be such that $r \neq 0$ and $rm \in N$. For any $s \in R$, we have $r(sm) \in N$. Thus, $sm \in \mathrm{Cl}_{M}(N)$. Hence, $\mathrm{Cl}_{M}(N)$ is a submodule of $M$.

$\mathrm{(b)}:$ For any element $n \in N$, we have $1n \in N$, so $N \subseteq \mathrm{Cl}_{M}(N)$. It is evident that $\mathrm{Cl}_{M}(N) \subseteq \mathrm{Cl}_{M}(\mathrm{Cl}_{M}(N))$. Now, suppose $m \in \mathrm{Cl}_{M}(\mathrm{Cl}_{M}(N))$. This implies, for some nonzero $r \in R$, $rm$ lie in $\mathrm{Cl}_{M}(N)$. Again, for some nonzero $s \in R$, $(sr)m \in N$. As, $sr \neq 0$, we conclude that $m \in \mathrm{Cl}_{M}(N)$. Hence, $\mathrm{Cl}_{M}(\mathrm{Cl}_{M}(N)) \subseteq \mathrm{Cl}_{M}(N)$.

$\mathrm{(c)}:$ Let $m \in \mathrm{Cl}_{M}(N_{1})$. Then, $rm \in N_{1}$ for some nonzero $r \in R$. As $rm \in N_{2}$ as well, we conclude that $m \in \mathrm{Cl}_{M}(N_{2})$, that is, $\mathrm{Cl}_{M}(N_{1}) \subseteq \mathrm{Cl}_{M}(N_{2})$.

$\mathrm{(d)}:$ Let $s \in f(\mathrm{Cl}_{M}(T))$. Then, there exists $m \in \mathrm{Cl}_{M}(T)$ such that $f(m) = s$. Also, there exists $r \in R$ such that $r \neq 0$ and $rm \in T$. This shows that $rf(m) \in f(T)$, that is, $s \in \mathrm{Cl}_{S}(f(T))$. Hence, we get that $f(\mathrm{Cl}_{M}(T)) \leq \mathrm{Cl}_{S}(f(T))$.
\end{proof}

The question that we investigate is whether the closure operator commutes with arbitrary direct sums of submodules. We see that it is not true in general, but in bigger space, the closure operator commutes with arbitrary direct sum of submodules. 

\begin{lemma}\label{joymadurga}
Let $N_{1}, N_{2}, \ldots, N_{n}$ are the submodules of $M$. If $D$ denotes the set of all nonzero elements in $R$, then $\mathrm{Cl}_{D^{-1}M}\big(\bigoplus_{i = 1}^{n} N_{i}\big) = \bigoplus_{i = 1}^{n} \mathrm{Cl}_{D^{-1}M}(N_{i})$. 
\end{lemma}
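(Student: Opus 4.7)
The plan is to show both inclusions of the set equality, and also verify that the sum on the right-hand side is actually direct inside $D^{-1}M$. Throughout, I would use the basic identification that $M$ embeds in $D^{-1}M$ (since $M$ is torsion-free over $R$) and that $D^{-1}M$ is itself torsion-free over $R$, so that the closure operator is well-defined on submodules of $D^{-1}M$.

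First I would prove the inclusion $\bigoplus_{i=1}^n \mathrm{Cl}_{D^{-1}M}(N_i) \subseteq \mathrm{Cl}_{D^{-1}M}\bigl(\bigoplus_{i=1}^n N_i\bigr)$ by taking $m_i \in \mathrm{Cl}_{D^{-1}M}(N_i)$ with a witness $r_i \in R \setminus \{0_R\}$ such that $r_i m_i \in N_i$. Setting $r = r_1 r_2 \cdots r_n$, one has $r \neq 0_R$, and since each $N_i$ is an $R$-submodule, $r m_i \in N_i$ for every $i$. Summing, $r(m_1 + \cdots + m_n) \in \bigoplus_{i=1}^n N_i$, which places $m_1 + \cdots + m_n$ in the closure.

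For the reverse inclusion, I would take $m \in \mathrm{Cl}_{D^{-1}M}\bigl(\bigoplus_{i=1}^n N_i\bigr)$ with nonzero $r \in R$ satisfying $rm = n_1 + \cdots + n_n$ with $n_i \in N_i$. Working inside $D^{-1}M$, one may write $m = (n_1 / r) + \cdots + (n_n / r)$, where each $n_i/r \in D^{-1}M$ is annihilated into $N_i$ by multiplying by $r$, hence lies in $\mathrm{Cl}_{D^{-1}M}(N_i)$.

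The main obstacle, and the step that actually uses the standing hypotheses, is verifying that $\sum_{i=1}^n \mathrm{Cl}_{D^{-1}M}(N_i)$ is an internal direct sum in $D^{-1}M$; without this, the statement of the lemma is not even well-formed. I would suppose $m_1 + \cdots + m_n = 0_{D^{-1}M}$ with $m_i \in \mathrm{Cl}_{D^{-1}M}(N_i)$, pick witnesses $r_i \in R \setminus \{0_R\}$ with $r_i m_i \in N_i$, and again set $r = r_1 \cdots r_n$, so that $r m_i \in N_i$ for all $i$. The equation $r m_1 + \cdots + r m_n = 0_{D^{-1}M}$ is now a relation among elements of $M$ inside $\bigoplus_{i=1}^n N_i$, so directness of that sum forces $r m_i = 0_{D^{-1}M}$ for every $i$. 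Since $D^{-1}M$ is torsion-free over $R$ and $r \neq 0_R$, we conclude $m_i = 0_{D^{-1}M}$ for every $i$, completing the proof.
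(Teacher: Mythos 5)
Your proof is correct and follows essentially the same strategy as the paper's: clear denominators by multiplying the witnesses $r_i$ together, and use torsion-freeness of $D^{-1}M$ to obtain both the reverse inclusion and the directness of the sum. The only difference is organizational: the paper reduces to two summands $L,S$ (checking $\mathrm{Cl}_{D^{-1}M}(L)\cap\mathrm{Cl}_{D^{-1}M}(S)=\{0_{M}\}$ and splitting $\frac{m}{d}=\frac{l}{r}+(\frac{m}{d}-\frac{l}{r})$), whereas you treat all $n$ summands at once and verify the full independence condition directly, which is a slightly cleaner packaging of the same ideas.
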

\begin{proof}
It is enough to show that $\mathrm{Cl}_{D^{-1}M}(L \bigoplus S) = \mathrm{Cl}_{D^{-1}M}(L) \bigoplus \mathrm{Cl}_{D^{-1}M}(S)$ for any two submodules $L, S$ of $M$. Observe that if $\frac{m}{d} \in \mathrm{Cl}_{D^{-1}M}(L) \cap \mathrm{Cl}_{D^{-1}M}(S)$, then $\frac{r_{1}m}{d} \in L$ and $\frac{r_{2}m}{d} \in S$ for some nonzero $r_{1}, r_{2} \in R$. As $(r_{1}r_{2})\frac{m}{d} \in L \cap S$, we see that $\frac{m}{d} = 0_{M}$. Now, suppose $(\frac{m_{1}}{d_{1}} + \frac{m_{2}}{d_{2}}) \in \mathrm{Cl}_{D^{-1}M}(L) \bigoplus \mathrm{Cl}_{D^{-1}M}(S)$. We have then $\frac{r_{1}m_{1}}{d_{1}} \in L$ and $\frac{r_{2}m_{2}}{d_{2}} \in S$ for some $r_{1}, r_{2} \in R \setminus \{0\}$. As $(r_{1}r_{2})(\frac{m_{1}}{d_{1}} + \frac{m_{2}}{d_{2}}) \in L \bigoplus S$, we see that $(\frac{m_{1}}{d_{1}} + \frac{m_{2}}{d_{2}}) \in \mathrm{Cl}_{D^{-1}M}(L \bigoplus S)$. Conversely, let $\frac{m}{d} \in \mathrm{Cl}_{D^{-1}M}(L \bigoplus S)$. Therefore, for some nonzero $r \in R$, we have $\frac{rm}{d} \in L \bigoplus S$. Let $\frac{rm}{d} = l + s$. Then, $r(\frac{m}{d} - \frac{l}{r}) = s$, that is, $(\frac{m}{d} - \frac{l}{r}) \in \mathrm{Cl}_{D^{-1}M}(S)$. Also, observe that $\frac{l}{r} \in \mathrm{Cl}_{D^{-1}M}(L)$. Hence, we get that $\frac{m}{d} = \frac{l}{r} + (\frac{m}{d} - \frac{l}{r})$, that is, $\frac{m}{d} \in \mathrm{Cl}_{D^{-1}M}(L) \bigoplus \mathrm{Cl}_{D^{-1}M}(S)$.
\end{proof}

\begin{theorem}\label{joymakamakhyama}
If $\{N_{\alpha}\}_{\alpha \in \Lambda}$ be a family of submodules indexed by a nonempty set $\Lambda$, then $\mathrm{Cl}_{D^{-1}M}\big(\bigoplus_{\alpha \in \Lambda} N_{\alpha}\big) = \bigoplus_{\alpha \in \Lambda} \mathrm{Cl}_{D^{-1}M}(N_{\alpha})$. In particular, we have the following, $\mathrm{Cl}_{M}\big(\bigoplus_{\alpha \in \Lambda} N_{\alpha}\big) = M \cap \big(D^{-1}\big(\bigoplus_{\alpha \in \Lambda} \mathrm{Cl}_{M}(N_{\alpha})\big)\big)$.
\end{theorem}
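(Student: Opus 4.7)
The plan is to pass to the localization $D^{-1}M$, establish the equality there by reducing to the finite case in Lemma \eqref{joymadurga}, and then intersect with $M$ (embedded in $D^{-1}M$ via $m\mapsto m/1$) to deduce the ``in particular'' statement. A preliminary remark: because $D^{-1}R$ is the fraction field of $R$, the module $D^{-1}M$ is a $D^{-1}R$-vector space and hence torsion-free over $R$, and an easy ``clear denominators'' argument then shows that both $\bigoplus_{\alpha}N_{\alpha}\subseteq D^{-1}M$ and $\bigoplus_{\alpha}\mathrm{Cl}_{D^{-1}M}(N_{\alpha})$ are internal direct sums: given a finite relation $y_{\alpha_{1}}+\cdots+y_{\alpha_{k}}=0$ with $y_{\alpha_{i}}\in\mathrm{Cl}_{D^{-1}M}(N_{\alpha_{i}})$, I would pick nonzero $r_{i}\in R$ with $r_{i}y_{\alpha_{i}}\in N_{\alpha_{i}}$ and multiply by $r=r_{1}\cdots r_{k}\neq 0$, so each $ry_{\alpha_{i}}\in N_{\alpha_{i}}$ and the directness of $\bigoplus N_{\alpha_{i}}$ forces $ry_{\alpha_{i}}=0$, hence $y_{\alpha_{i}}=0$ by torsion-freeness; the same argument inside $M$ handles $\bigoplus_{\alpha}\mathrm{Cl}_{M}(N_{\alpha})$.

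For the main equality $\mathrm{Cl}_{D^{-1}M}\bigl(\bigoplus_{\alpha}N_{\alpha}\bigr)=\bigoplus_{\alpha}\mathrm{Cl}_{D^{-1}M}(N_{\alpha})$, both inclusions are finitary reductions to Lemma \eqref{joymadurga}. For $\supseteq$, any element of the right-hand side lies in $\bigoplus_{\alpha\in F}\mathrm{Cl}_{D^{-1}M}(N_{\alpha})$ for some finite $F\subseteq\Lambda$, which by the lemma equals $\mathrm{Cl}_{D^{-1}M}\bigl(\bigoplus_{\alpha\in F}N_{\alpha}\bigr)\subseteq\mathrm{Cl}_{D^{-1}M}\bigl(\bigoplus_{\alpha}N_{\alpha}\bigr)$. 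For $\subseteq$, given $x$ with $rx\in\bigoplus_{\alpha}N_{\alpha}$ for some nonzero $r$, the support of $rx$ is some finite $F\subseteq\Lambda$, so Lemma \eqref{joymadurga} again places $x$ inside $\bigoplus_{\alpha\in F}\mathrm{Cl}_{D^{-1}M}(N_{\alpha})$.

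For the ``in particular'' half I would use two observations. Torsion-freeness makes $m\mapsto m/1$ injective, and a direct unwinding gives $\mathrm{Cl}_{M}(X)=M\cap D^{-1}(X)$ for every submodule $X\subseteq M$, since $dm=x\in X$ is equivalent to $m=x/d$ in $D^{-1}M$; applying this with $X=\bigoplus_{\alpha}\mathrm{Cl}_{M}(N_{\alpha})$ reduces matters to proving $\mathrm{Cl}_{M}\bigl(\bigoplus_{\alpha}N_{\alpha}\bigr)=\mathrm{Cl}_{M}\bigl(\bigoplus_{\alpha}\mathrm{Cl}_{M}(N_{\alpha})\bigr)$. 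The $\subseteq$ inclusion is immediate from part (c) of Proposition \eqref{debanandamaharaj}; for $\supseteq$, if $rm=\sum_{\alpha\in F}y_{\alpha}$ with $y_{\alpha}\in\mathrm{Cl}_{M}(N_{\alpha})$, I would choose nonzero $r_{\alpha}\in R$ with $r_{\alpha}y_{\alpha}\in N_{\alpha}$ and multiply by $r^{\ast}=\prod_{\alpha\in F}r_{\alpha}$, landing $r^{\ast}rm$ inside $\bigoplus_{\alpha}N_{\alpha}$ with $r^{\ast}r\neq 0$.

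The main obstacle I anticipate is bookkeeping: one must carefully separate the roles of $N_{\alpha}\subseteq M$ and its image $\{n/1:n\in N_{\alpha}\}\subseteq D^{-1}M$ so that the identity $\mathrm{Cl}_{M}(X)=M\cap D^{-1}(X)$ is applied correctly and Lemma \eqref{joymadurga} transfers to the localized setting without ambiguity. Once the notational conventions are pinned down, everything reduces to finitary arguments and elementary fraction manipulation.
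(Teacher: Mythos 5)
Your proposal is correct and follows essentially the same route as the paper: the main equality is obtained exactly as in the paper's proof, by reducing to finitely many summands via the support of a cleared-denominator element and invoking Lemma \eqref{joymadurga}, and your preliminary directness check is a harmless (indeed welcome) addition. For the ``in particular'' clause you argue directly inside $M$ via the identity $\mathrm{Cl}_{M}(X) = M \cap D^{-1}X$ together with $\mathrm{Cl}_{M}\big(\bigoplus_{\alpha} N_{\alpha}\big) = \mathrm{Cl}_{M}\big(\bigoplus_{\alpha} \mathrm{Cl}_{M}(N_{\alpha})\big)$, whereas the paper proves $\mathrm{Cl}_{D^{-1}M}(N) = D^{-1}\mathrm{Cl}_{M}(N)$ and intersects the localized equality with $M$; these are only cosmetically different, resting on the same clear-denominators manipulations.
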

\begin{proof}
Let $\frac{m}{d} \in \mathrm{Cl}_{D^{-1}M}\big(\bigoplus_{\alpha \in \Lambda} N_{\alpha}\big)$. Then, for some $r \neq 0$, we have $\frac{rm}{d} \in \bigoplus_{\alpha \in \Lambda} N_{\alpha}$. Let $\frac{rm}{d} = \sum_{i = 1}^{n} n_{\alpha_{i}}$. This implies, $\frac{m}{d} \in \mathrm{Cl}_{D^{-1}M}(\bigoplus_{i = 1}^{n} N_{\alpha_{i}})$. Now, applying lemma \eqref{joymadurga}, we get that $\mathrm{Cl}_{D^{-1}M}(\bigoplus_{i = 1}^{n} N_{\alpha_{i}}) \leq \bigoplus_{\alpha \in \Lambda} \mathrm{Cl}_{D^{-1}M}(N_{\alpha})$. Hence, we obtain that $\frac{m}{d} \in \bigoplus_{\alpha \in \Lambda} \mathrm{Cl}_{D^{-1}M}(N_{\alpha})$. Conversely, suppose $\sum_{i = 1}^{n} \frac{m_{\alpha_{i}}}{d_{\alpha_{i}}} \in \bigoplus_{\alpha \in \Lambda} \mathrm{Cl}_{D^{-1}M}(N_{\alpha})$, where $\frac{m_{\alpha_{i}}}{d_{\alpha_{i}}} \in \mathrm{Cl}_{D^{-1}M}(N_{\alpha_{i}})$. Then, for each $i$ with $1 \leq i \leq n$, there exists some nonzero $r_{\alpha_{i}}$ such that $\frac{r_{\alpha_{i}}m_{\alpha_{i}}}{d_{\alpha_{i}}} \in N_{\alpha_{i}}$. But then $(r_{\alpha_{1}}r_{\alpha_{2}} \cdots r_{\alpha_{n}})\big(\sum_{i = 1}^{n} \frac{m_{\alpha_{i}}}{d_{\alpha_{i}}}\big) \in \bigoplus_{i = 1}^{n} N_{\alpha_{i}}$ shows that $(r_{\alpha_{1}}r_{\alpha_{2}} \cdots r_{\alpha_{n}})\big(\sum_{i = 1}^{n} \frac{m_{\alpha_{i}}}{d_{\alpha_{i}}}\big) \in \bigoplus_{\alpha \in \Lambda} N_{\alpha}$, that is, $\sum_{i = 1}^{n} \frac{m_{\alpha_{i}}}{d_{\alpha_{i}}} \in \mathrm{Cl}_{D^{-1}M}\big(\bigoplus_{\alpha \in \Lambda} N_{\alpha}\big)$.

For any submodule $N$ of $M$, if we show that $\mathrm{Cl}_{D^{-1}M}(N) = D^{-1}\mathrm{Cl}_{M}(N)$, then we have $\bigoplus_{\alpha \in \Lambda} \mathrm{Cl}_{D^{-1}M}(N_{\alpha}) = D^{-1}\big(\bigoplus_{\alpha \in \Lambda} \mathrm{Cl}_{M}(N_{\alpha})\big)$, since $D^{-1}$ distributes over arbitrary direct sum. So, suppose $\frac{m}{d} \in D^{-1}\mathrm{Cl}_{M}(N)$. As $m \in \mathrm{Cl}_{M}(N)$, we have $rm \in N$ for some nonzero $r \in R$. But then, we have $(rd)(\frac{m}{d}) \in N$. Hence, $\frac{m}{d} \in \mathrm{Cl}_{D^{-1}M}(N)$. Conversely, suppose $\frac{m}{d} \in \mathrm{Cl}_{D^{-1}M}(N)$. Then, $\frac{rm}{d} \in N$ for some nonzero $r \in R$. This implies, $rm \in N$, that is, $m \in \mathrm{Cl}_{M}(N)$. Hence, we get that $\frac{m}{d} \in D^{-1}\mathrm{Cl}_{M}(N)$. The proof is now completed by taking intersection on both sides of the equation $\mathrm{Cl}_{D^{-1}M}\big(\bigoplus_{\alpha \in \Lambda} N_{\alpha}\big) = D^{-1}\big(\bigoplus_{\alpha \in \Lambda} \mathrm{Cl}_{M}(N_{\alpha})\big)$ with $M$.
\end{proof}

\begin{theorem}\label{adisoktiaddyasokti}
If $N$ is a submodule of $M$, then $\mathrm{Soc}\big(\mathrm{Cl}_{M}(N)\big) = \mathrm{Cl}_{M}\big(\mathrm{Soc}(N)\big)$. In particular, they are equal to $\mathrm{Soc}(N)$.
\end{theorem}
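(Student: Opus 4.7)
The plan is to observe that, under the standing hypothesis of the section ($M$ torsion-free over an integral domain $R$) combined with the blanket assumption of the paper that $R$ is not a field, every socle in sight is trivial, so the claimed identity degenerates to $\{0_M\} = \{0_M\} = \{0_M\}$.

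First I would prove that $\mathrm{Soc}(N) = \{0_M\}$ for every submodule $N$ of $M$. Any simple submodule $S \subseteq N$ is necessarily cyclic, say $S = Rs$ with $s \neq 0_M$. Since $M$ is torsion-free, $\mathrm{Ann}_R(s) = \{0_R\}$, so $S \simeq R$ as an $R$-module. But $R$ is an integral domain and not a field, hence admits a proper nontrivial ideal and is therefore not simple as an $R$-module, a contradiction. So no simple submodule can lie inside $N$, forcing $\mathrm{Soc}(N) = \{0_M\}$.

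Next I would apply exactly the same argument to $\mathrm{Cl}_M(N)$. By part (a) of Proposition \ref{debanandamaharaj} this is a submodule of $M$, and any submodule of a torsion-free module is itself torsion-free; so $\mathrm{Soc}(\mathrm{Cl}_M(N)) = \{0_M\}$ for the same reason. On the other side, $\mathrm{Cl}_M(\mathrm{Soc}(N)) = \mathrm{Cl}_M(\{0_M\}) = \{0_M\}$, again by part (a) of Proposition \ref{debanandamaharaj}. Combining, $\mathrm{Soc}(\mathrm{Cl}_M(N)) = \mathrm{Cl}_M(\mathrm{Soc}(N)) = \mathrm{Soc}(N) = \{0_M\}$, which is precisely the conclusion.

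The only delicate point is in the first paragraph: one has to notice that torsion-freeness forces every cyclic submodule of $M$ to be isomorphic to the regular module $R$, and that the paper's standing exclusion of $R$ being a field is what prevents $R$ from being simple. Once this observation is made the statement collapses to the two trivial equalities above without further calculation; in particular, the closure operator plays essentially no role beyond the identity $\mathrm{Cl}_M(\{0_M\}) = \{0_M\}$ already recorded in Proposition \ref{debanandamaharaj}(a).
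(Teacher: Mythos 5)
Your proof is correct as a proof of the statement under the paper's standing conventions, but it takes a genuinely different --- and in fact degenerate --- route from the paper's. Your key point is sound: since the paper assumes throughout that $R$ is not a field, any simple submodule $S=Rs$ of the torsion-free module $M$ would satisfy $\mathrm{Ann}_{R}(s)=\{0_{R}\}$, hence $S\simeq R$, which is not a simple $R$-module because a non-field domain has a nonzero proper ideal; so every socle in the statement is $\{0_{M}\}$ and the identity collapses to $\{0_{M}\}=\{0_{M}\}=\{0_{M}\}$, with $\mathrm{Cl}_{M}(\{0_{M}\})=\{0_{M}\}$ supplied by Proposition \eqref{debanandamaharaj}(a). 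The paper argues quite differently and never uses the ``not a field'' convention: it first shows that $N$ and $\mathrm{Cl}_{M}(N)$ have the same simple submodules (if $S=\langle t\rangle\leq\mathrm{Cl}_{M}(N)$ and $rt\in N$ with $r\neq 0$, then $\langle rt\rangle=\langle t\rangle$), giving $\mathrm{Soc}(\mathrm{Cl}_{M}(N))=\mathrm{Soc}(N)$, and then uses the decomposition $\mathrm{Soc}(N)=\bigoplus_{\alpha}S_{\alpha}$, Theorem \eqref{joymakamakhyama}, and the fact that $\mathrm{Cl}_{Z}(L)=L$ for a simple submodule $L$ of a torsion-free module $Z$ to get $\mathrm{Cl}_{M}(\mathrm{Soc}(N))=\mathrm{Soc}(N)$. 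What each approach buys: yours is much shorter and exposes that, under the blanket hypothesis that $R$ is not a field, this theorem and Corollary \eqref{amabjg} hold for a vacuous reason (torsion-free modules over such $R$ have zero socle, and the only semisimple one is the zero module) --- an observation worth flagging to the authors; the paper's argument is insensitive to whether $R$ is a field and exercises the closure machinery of the section, which is what gives the statement nontrivial content in the field case, where the ``semisimple'' alternative of Corollary \eqref{amabjg} actually occurs. If the authors do intend fields to be admitted here, your proof needs the (easy) supplementary remark that over a field $\mathrm{Cl}_{M}$ is the identity operator and $\mathrm{Soc}(N)=N$, after which the identity is again immediate.
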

\begin{proof}
If $S$ is a simple submodule of $N$ then clearly it is also simle in $\mathrm{Cl}_{M}(N)$. Conversely, if $S = \langle t \rangle$ is a simple submodule of $\mathrm{Cl}_{M}(N)$, then $rt \in N$ for some nonzero $r \in R$. As $\langle rt \rangle = \langle t \rangle$, we conclude that $S$ is a simple submodule of $N$. Hence, we conclude that $\mathrm{Soc}\big(\mathrm{Cl}_{M}(N)\big) = \mathrm{Soc}(N)$. Let $\mathrm{Soc}(N) = \bigoplus_{\alpha \in \Lambda} S_{\alpha}$, where $S_{\alpha}$ is a simple submodule of $N$ for each $\alpha \in \Lambda$. Now, using theorem \eqref{joymakamakhyama}, we get that $\mathrm{Cl}_{D^{-1}M}\big(\mathrm{Soc}(N)\big) = \bigoplus_{\alpha \in \Lambda} \mathrm{Cl}_{D^{-1}M}(S_{\alpha})$. As simple submodules of $M$ are simple submodules of $D^{-1}M$ and vice-versa, we see that $S_{\alpha}$ is a simple submodule of $D^{-1}M$ for each $\alpha \in \Lambda$. In this position, we wish to use the fact that for any left torsion-free module $Z$ (over some integral domain $B$) and for any simple submodule $L$ of $Z$, we have $\mathrm{Cl}_{Z}(L) = L$. To see this, let $z \in \mathrm{Cl}_{Z}(L)$ and $bz \in L$ for some nonzero $b \in B$. Let $L = \langle l \rangle$. Clearly, we have $\langle l \rangle = \langle bl \rangle = \langle bz \rangle$. If $bz = bl$, then $b(z - l) = 0$. As $Z$ is torsion-free and $b \neq 0$, we conclude that $z = l$, that is $z \in L$. If $bz \neq bl$, then for some nonzero $b^{\prime} \in B$, we have $bz = (b^{\prime}b)l$, that is, $b(z - b^{\prime}l) = 0$. Similar arguments as above shows that $z = b^{\prime}l$, that is, $z \in L$. Hence, $\mathrm{Cl}_{Z}(L) \subseteq L$. Now, applying proposition \eqref{debanandamaharaj}, we conclude that $\mathrm{Cl}_{Z}(L) = L$. Thus, we have that $\mathrm{Cl}_{D^{-1}M}\big(\mathrm{Soc}(N)\big) = \bigoplus_{\alpha \in \Lambda} S_{\alpha}$, that is, $\mathrm{Cl}_{D^{-1}M}\big(\mathrm{Soc}(N)\big) = \mathrm{Soc}(N)$. Now, $M \cap \mathrm{Cl}_{D^{-1}M}\big(\mathrm{Soc}(N)\big) = \mathrm{Cl}_{M}\big(\mathrm{Soc}(N)\big)$ shows that $\mathrm{Cl}_{M}\big(\mathrm{Soc}(N)\big) = \mathrm{Soc}(N)$. Hence, $\mathrm{Soc}\big(\mathrm{Cl}_{M}(N)\big) = \mathrm{Cl}_{M}\big(\mathrm{Soc}(N)\big) = \mathrm{Soc}(N)$.
\end{proof}

\begin{corollary}\label{amabjg}
Left torsion-free modules over integral domains are either semisimple or do not possess essential socles.
\end{corollary}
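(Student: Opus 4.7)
The plan is to run a contrapositive argument, with the self-closure identity $\mathrm{Cl}_{M}(\mathrm{Soc}(M)) = \mathrm{Soc}(M)$ from theorem \eqref{adisoktiaddyasokti} (specialized to $N = M$) as the engine. So, assume $M$ is a torsion-free $R$-module that is \emph{not} semisimple; I would show that $\mathrm{Soc}(M)$ cannot be a proper essential submodule of $M$.

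Two degenerate cases deserve a quick look up front. Since $M$ is not semisimple, theorem \eqref{baba} yields $\mathrm{Soc}(M) \subsetneq M$. If in addition $\mathrm{Soc}(M) = \{0_{M}\}$, then by definition \eqref{df} the zero submodule is not a proper essential submodule at all (it is not nontrivial), so there is nothing to prove. The remaining case to treat is therefore $\{0_{M}\} \subsetneq \mathrm{Soc}(M) \subsetneq M$.

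Supposing, for contradiction, that in this case $\mathrm{Soc}(M)$ is proper essential in $M$, I would pick any $m \in M \setminus \mathrm{Soc}(M)$. The cyclic submodule $\langle m \rangle$ is nontrivial, so essentiality of $\mathrm{Soc}(M)$ furnishes some $r \in R$ with $rm \neq 0_{M}$ and $rm \in \mathrm{Soc}(M)$; necessarily $r \neq 0_{R}$. By definition \eqref{madurga} this places $m$ inside $\mathrm{Cl}_{M}(\mathrm{Soc}(M))$, which by theorem \eqref{adisoktiaddyasokti} coincides with $\mathrm{Soc}(M)$ itself, contradicting the choice of $m$.

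I anticipate no serious obstacle in the argument; the substantive content is packaged in theorem \eqref{adisoktiaddyasokti} (which in turn leans on the direct-sum/closure compatibility of theorem \eqref{joymakamakhyama}), and the proof reduces to matching the definition of proper essential submodule against the closure identity. The only subtle point is remembering to dispose of the edge case $\mathrm{Soc}(M) = \{0_{M}\}$, where the claim holds vacuously because the nontriviality clause in definition \eqref{df} is violated before essentiality is even tested.
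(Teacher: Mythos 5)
Your proof is correct and follows essentially the same route as the paper: both arguments set $N = M$ in theorem \eqref{adisoktiaddyasokti} to obtain $\mathrm{Cl}_{M}\big(\mathrm{Soc}(M)\big) = \mathrm{Soc}(M)$, invoke theorem \eqref{baba} to get $\mathrm{Soc}(M) \subsetneq M$, and conclude that the socle cannot be essential. You merely spell out what the paper leaves implicit in its final ``Thus'' (picking $m \notin \mathrm{Soc}(M)$ and deriving the contradiction via the closure identity) and dispose explicitly of the degenerate case $\mathrm{Soc}(M) = \{0_{M}\}$.
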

\begin{proof}
If $M$ is not semisimple then $\mathrm{Soc}(M) \neq M$ (see theorem \eqref{baba}). Now, if we set $N = M$ in theorem \eqref{adisoktiaddyasokti}, we see that $\mathrm{Cl}_{M}\big(\mathrm{Soc}(M)\big) = \mathrm{Soc}(M)$. Thus, the socle $\mathrm{Soc}(M)$ is not essential in $M$.
\end{proof}

\begin{theorem}\label{joybabamahadev}
For submodules $U, V$ of $M$, we have the following exact sequence: $0 \rightarrow D^{-1}\mathrm{Cl}_{M}(U) \xrightarrow{\alpha^{\prime}} D^{-1}\mathrm{Cl}_{M}(V) \xrightarrow{\beta^{\prime}} D^{-1}\mathrm{Cl}_{M/U}(V/U) \rightarrow 0$. Moreover, it splits if the sequence $0 \rightarrow U \xrightarrow{\alpha} V \xrightarrow{\beta} V/U \rightarrow 0$ splits.
\end{theorem}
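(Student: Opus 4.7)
The approach is to define $\alpha'$ and $\beta'$ functorially from the original sequence and then reduce both exactness and splitting to the exactness of localization at $D$. I would define $\alpha'$ as the localization of the inclusion $\mathrm{Cl}_M(U) \hookrightarrow \mathrm{Cl}_M(V)$, which exists by Proposition \ref{debanandamaharaj}(c) since $U \subseteq V$. I would define $\beta'$ by first applying the quotient $\pi: M \to M/U$: Proposition \ref{debanandamaharaj}(d) gives $\pi(\mathrm{Cl}_M(V)) \subseteq \mathrm{Cl}_{M/U}(\pi(V)) = \mathrm{Cl}_{M/U}(V/U)$, yielding a well-defined map on closures that localizes to $\beta'$.

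For exactness I would check the four standard clauses. Injectivity of $\alpha'$ is immediate because localization at $D$ preserves the inclusion of torsion-free modules. The composition $\beta' \circ \alpha'$ vanishes because any $m \in \mathrm{Cl}_M(U)$ has $rm \in U$ for some $r \in D$, forcing $r(m+U) = 0$ and hence $(m+U)/d = 0$ in $D^{-1}(M/U)$. For $\ker(\beta') \subseteq \operatorname{im}(\alpha')$, an element $v/d$ killed by $\beta'$ yields some $s \in D$ with $sv \in U$, so $v \in \mathrm{Cl}_M(U)$. For surjectivity, given $w \in \mathrm{Cl}_{M/U}(V/U)$, lift to $m \in M$ with $w = m+U$, pick $r \in D$ with $rw \in V/U$, and observe that $rm - v \in U \subseteq V$ for some $v \in V$ forces $rm \in V$, whence $m \in \mathrm{Cl}_M(V)$ and $\beta'(m/d) = w/d$.

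The splitting will follow from the sharper identity $D^{-1}\mathrm{Cl}_M(N) = D^{-1}N$ inside $D^{-1}M$ for every submodule $N$: indeed $m \in \mathrm{Cl}_M(N)$ with $rm \in N$ gives $m/d = rm/(rd) \in D^{-1}N$, while the reverse inclusion comes from $N \subseteq \mathrm{Cl}_M(N)$. The same argument applied inside $D^{-1}(M/U)$ identifies $D^{-1}\mathrm{Cl}_{M/U}(V/U)$ with $D^{-1}(V/U)$. Under these identifications the constructed sequence coincides with the localization of $0 \to U \xrightarrow{\alpha} V \xrightarrow{\beta} V/U \to 0$, and $\alpha', \beta'$ become $D^{-1}\alpha, D^{-1}\beta$; hence a section $\sigma$ of $\beta$ localizes to a section $D^{-1}\sigma$ of $\beta'$ by the additivity of $D^{-1}(-)$. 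I expect the main obstacle to be purely bookkeeping: verifying that $\alpha'$ and $\beta'$ defined via closures genuinely agree with the localized maps once $D^{-1}\mathrm{Cl}(-) = D^{-1}(-)$ is in place, after which exactness and splitting become immediate consequences of the exactness and functoriality of localization at $D$.
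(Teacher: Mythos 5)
Your proof is correct, but it takes a genuinely different route from the paper's. The paper works entirely inside $D^{-1}M$: it builds an explicit isomorphism $\varphi$ from $\mathrm{Cl}_{D^{-1}M/D^{-1}U}(D^{-1}V/D^{-1}U)$ onto the quotient $\mathrm{Cl}_{D^{-1}M}(D^{-1}V)/\mathrm{Cl}_{D^{-1}M}(D^{-1}U)$, checks well-definedness, injectivity and surjectivity by hand, and then translates both sides into $D^{-1}\mathrm{Cl}_{M/U}(V/U)$ and $D^{-1}\big(\mathrm{Cl}_M(V)/\mathrm{Cl}_M(U)\big)$ via the identity $\mathrm{Cl}_{D^{-1}M}(N)=D^{-1}\mathrm{Cl}_M(N)$ from Theorem \eqref{joymakamakhyama}; the splitting is then deduced from Lemma \eqref{joymadurga}. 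You instead prove the sharper collapse $D^{-1}\mathrm{Cl}_M(N)=D^{-1}N$ (which is correct: $m/d=(rm)/(rd)$ when $rm\in N$, and the reverse inclusion is Proposition \eqref{debanandamaharaj}(b)), after which the displayed sequence is literally the localization of $0\to U\to V\to V/U\to 0$ and both exactness and splitting follow from the exactness and functoriality of $D^{-1}(-)$. What your approach buys is economy and transparency — it reveals that after localizing, the closure operators contribute nothing, so the theorem is exactness of localization in disguise; the direct verifications of $\ker\beta'=\operatorname{im}\alpha'$ and surjectivity that you supply are sound (your surjectivity step correctly uses $U\subseteq V$, which is implicit in the statement). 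What the paper's approach buys is that it stays within the closure calculus built up in the section and exercises Lemma \eqref{joymadurga} and Theorem \eqref{joymakamakhyama}. One shared caveat, not a gap in your argument: $\mathrm{Cl}_{M/U}(V/U)$ is applied to the possibly non-torsion-free module $M/U$, so Definition \eqref{madurga} is being tacitly extended; your identity $D^{-1}\mathrm{Cl}_{M/U}(V/U)=D^{-1}(V/U)$ survives this extension unchanged.
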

\begin{proof}
Let 
$\varphi: \mathrm{Cl}_{D^{-1}M/D^{-1}U}(D^{-1}V/D^{-1}U) \rightarrow \mathrm{Cl}_{D^{-1}M}(D^{-1}V)/\mathrm{Cl}_{D^{-1}M}(D^{-1}U)$
be a map defined by $\varphi: \frac{m}{d} + D^{-1}U \mapsto \frac{m}{d} + \mathrm{Cl}_{D^{-1}M}(D^{-1}U)$. Observe that we have $\frac{m}{d} + \mathrm{Cl}_{D^{-1}M}(D^{-1}U) \in \mathrm{Cl}_{D^{-1}M}(D^{-1}V)/\mathrm{Cl}_{D^{-1}M}(D^{-1}U)$. Because, for some nonzero $r \in R$, we have $\frac{rm}{d} + D^{-1}U = \frac{v}{d^{\prime}} + D^{-1}U$. But this implies $r(\frac{m}{d} - \frac{v}{rd^{\prime}}) \in D^{-1}U$, that is, $(\frac{m}{d} - \frac{v}{rd^{\prime}}) \in \mathrm{Cl}_{D^{-1}M}(D^{-1}U)$. As $\frac{v}{rd^{\prime}} \in \mathrm{Cl}_{D^{-1}M}(D^{-1}V)$, $\frac{m}{d} + \mathrm{Cl}_{D^{-1}M}(D^{-1}U)$ is an element of $\mathrm{Cl}_{D^{-1}M}(D^{-1}V)/\mathrm{Cl}_{D^{-1}M}(D^{-1}U)$. To see that $\varphi$ is well-defined, observe that for elements  $\frac{m}{d} + D^{-1}U, \frac{m^{\prime}}{d^{\prime}} + D^{-1}U$ in $\mathrm{Cl}_{D^{-1}M/D^{-1}U}(D^{-1}V/D^{-1}U)$, if we have $(\frac{m}{d} - \frac{m^{\prime}}{d^{\prime}}) \in D^{-1}U$, then $\frac{m}{d} + \mathrm{Cl}_{D^{-1}M}(D^{-1}U) = \frac{m^{\prime}}{d^{\prime}} + \mathrm{Cl}_{D^{-1}M}(D^{-1}U)$, since proposition \eqref{debanandamaharaj} yields $D^{-1}U \subseteq \mathrm{Cl}_{D^{-1}M}(D^{-1}U)$. It is easy to see that $\varphi$ is a module homomorphism. If $\frac{m}{d} + D^{-1}U \in \mathrm{Cl}_{D^{-1}M/D^{-1}U}(D^{-1}V/D^{-1}U)$ be such that $\frac{m}{d} \in \mathrm{Cl}_{D^{-1}M}(D^{-1}U)$, then $\frac{rm}{d} \in D^{-1}U$ for some nonzero $r \in R$, that is, $\frac{m}{d} \in D^{-1}U$. This shows that $\varphi$ is injective. Finally, for any $\frac{m}{d} + \mathrm{Cl}_{D^{-1}M}(D^{-1}U) \in \mathrm{Cl}_{D^{-1}M}(D^{-1}V)/\mathrm{Cl}_{D^{-1}M}(D^{-1}U)$, we have $\frac{rm}{d} \in D^{-1}V$ for some nonzero $r \in R$. As $r(\frac{m}{d} + D^{-1}U) \in D^{-1}V/D^{-1}U$, we conclude that $(\frac{m}{d} + D^{-1}U) \in \mathrm{Cl}_{D^{-1}M/D^{-1}U}(D^{-1}V/D^{-1}U)$, that is, $\varphi$ is surjective. Hence, $\varphi$ is an isomorphism. It is now easy to see that $\mathrm{Cl}_{D^{-1}M/D^{-1}U}(D^{-1}V/D^{-1}U) = D^{-1}\mathrm{Cl}_{M/U}(V/U)$ and $\mathrm{Cl}_{D^{-1}M}(D^{-1}V)/\mathrm{Cl}_{D^{-1}M}(D^{-1}U) = D^{-1}(\mathrm{Cl}_{M}(V)/\mathrm{Cl}_{M}(U))$. This completes the first part of the theorem. Applying lemma \eqref{joymadurga}, we can conclude the second part.
\end{proof}

We close this section with the following theorem that can also be viewed as an application of corollary \eqref{amabjg}.

\begin{theorem}
Left modules over integral domains with nonempty torsion-free parts are either semisimple or do not possess essential socles.
\end{theorem}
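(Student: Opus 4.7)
The plan is to establish the second disjunct directly: if $M$ is a left module over the integral domain $R$ (which, by the standing assumption, is not a field) and $M \neq \mathrm{Tor}(M)$, then $\mathrm{Soc}(M)$ is not essential in $M$. Along the way I will also observe that $M$ cannot be semisimple, so the conclusion really holds as stated rather than by vacuous first-disjunct satisfaction.

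First I would show that every simple $R$-submodule of $M$ is torsion. Such a submodule is cyclic, say $\langle s \rangle \cong R/\mathrm{Ann}_{R}(s)$ with $\mathrm{Ann}_{R}(s)$ maximal; since $R$ is not a field, this maximal ideal is nonzero, and hence $s$ is annihilated by a nonzero element of $R$. Summing over all simple submodules yields $\mathrm{Soc}(M) \subseteq \mathrm{Tor}(M)$, which by hypothesis is a proper subset of $M$, so $M$ is not semisimple.

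Next I would invoke corollary \eqref{amabjg}. Pick any $m_{0} \in M \setminus \mathrm{Tor}(M)$; the cyclic submodule $\langle m_{0} \rangle$ is a nonzero torsion-free submodule of $M$, isomorphic to $R$ as a left $R$-module. Since $R$ is a non-field integral domain, it is not semisimple, so corollary \eqref{amabjg} applied to the torsion-free module $\langle m_{0} \rangle$ forces $\mathrm{Soc}(\langle m_{0} \rangle)$ to be non-essential in $\langle m_{0} \rangle$. In fact, the reasoning of the previous paragraph applied inside $\langle m_{0} \rangle$ gives the sharper statement $\mathrm{Soc}(\langle m_{0} \rangle) = \{0_{M}\}$, because any nonzero simple submodule of a torsion-free module would consist of nonzero torsion elements, which is absurd.

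With $\mathrm{Soc}(\langle m_{0} \rangle) = \{0_{M}\}$ in hand, the conclusion is routine: $\mathrm{Soc}(M) \cap \langle m_{0} \rangle$ is a submodule of the semisimple module $\mathrm{Soc}(M)$, hence itself semisimple, and being contained in $\langle m_{0} \rangle$ it must lie inside $\mathrm{Soc}(\langle m_{0} \rangle) = \{0_{M}\}$. Therefore $\langle m_{0} \rangle$ is a nonzero submodule of $M$ with trivial intersection with $\mathrm{Soc}(M)$, witnessing that $\mathrm{Soc}(M)$ fails to be essential in $M$. The only subtle point I anticipate is keeping in mind that the standing assumption ``$R$ is not a field'' is exactly what makes simples torsion; without it, any vector space would be an immediate counterexample to the claim.
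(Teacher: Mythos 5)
Your proof is correct, and it takes a genuinely different and more elementary route than the paper. The paper proves this theorem deliberately as an application of corollary \eqref{amabjg}: assuming $M$ is not semisimple, it extracts a proper essential submodule via theorem \eqref{baba}, passes to the torsion-free quotient $M/\mathrm{Tor}(M)$ using the second part of lemma \eqref{gmkj}, applies corollary \eqref{amabjg} (together with theorem \eqref{babama}) to see that the intersection of the essential submodules of $M/\mathrm{Tor}(M)$ cannot be proper essential there, and then pulls this back to $M$ with the first part of lemma \eqref{gmkj} to conclude $\mathrm{Soc}(M)$ is not essential. You instead work entirely inside $M$: since $R$ is a non-field integral domain, every simple submodule is of the form $R/\mathcal{M}$ with $\mathcal{M}$ a nonzero maximal ideal, hence torsion, giving the sharper inclusion $\mathrm{Soc}(M) \subseteq \mathrm{Tor}(M)$; this at once shows $M$ is not semisimple when $\mathrm{Tor}(M) \neq M$, and any $m_{0} \in M \setminus \mathrm{Tor}(M)$ generates a copy of $R$ meeting $\mathrm{Tor}(M)$, and a fortiori $\mathrm{Soc}(M)$, trivially, so the socle is not essential. (Your detour through corollary \eqref{amabjg} for $\langle m_{0} \rangle$ is superfluous, as you yourself note: $\mathrm{Soc}(\langle m_{0} \rangle) = \{0_{M}\}$ follows directly, and indeed $\mathrm{Soc}(M) \cap \langle m_{0} \rangle \subseteq \mathrm{Tor}(M) \cap \langle m_{0} \rangle = \{0_{M}\}$ needs no appeal to semisimplicity of the socle.) What the two approaches buy: yours is self-contained, avoids the closure-operator machinery and the quotient lemma entirely, and yields the stronger structural fact $\mathrm{Soc}(M) \subseteq \mathrm{Tor}(M)$ with an explicit witness to non-essentiality; the paper's argument is less direct but serves its expository purpose of exhibiting the theorem as a consequence of its results on $\mathrm{Cl}_{M}$ and torsion-free modules. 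You are also right that the standing hypothesis that $R$ is not a field is exactly what makes simple modules torsion; without it the second disjunct fails for vector spaces, though the disjunctive statement itself would then hold trivially through semisimplicity.
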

\begin{proof}
If $M$ is semisimple, there is nothing to show. So, assume that $M$ is not semisimple. This implies $M$ has a proper essential submodule $E$ (see theorem \eqref{baba}). Now, applying lemma \eqref{gmkj}, we see that $(E + \mathrm{Tor}(M))/\mathrm{Tor}(M)$ is proper essential in $M/\mathrm{Tor}(M)$. Now, if $\mathcal{P}$ be the collection of all essential submodules of $M/\mathrm{Tor}(M)$ and if we denote $\overline{D} = D/\mathrm{Tor}(M)$ for $D/\mathrm{Tor}(M) \in \mathcal{P}$, then from corollary \eqref{amabjg}, we can conclude that the submodule $\bigcap_{\overline{D} \in \mathcal{P}} \overline{D}$ is not proper essential in $M/\mathrm{Tor}(M)$. Since we have $\big(\bigcap_{\overline{D} \in \mathcal{P}} D\big)/\mathrm{Tor}(M) \leq \bigcap_{\overline{D} \in \mathcal{P}} \overline{D}$, therefore, we can conclude from lemma \eqref{gmkj} that the submodule $\bigcap_{\overline{D} \in \mathcal{P}} D$ can not be proper essential in $M$. Further, since $\mathrm{Soc}(M)$ is a submodule of $\bigcap_{\overline{D} \in \mathcal{P}} D$, we conclude that $M$ can not possess an essential socle.
\end{proof}

\end{document}